\documentclass[a4paper,10pt,oneside  ]{article}	
\makeindex

\usepackage[utf8]{inputenc}
\usepackage{amsthm}
 \usepackage{a4wide}
\usepackage{amsmath}
\usepackage{amssymb}
\usepackage{graphicx}
\usepackage{xcolor}
\usepackage{epsfig}

\theoremstyle{plain}

\newtheorem{theorem}{Theorem}[section]

\newtheorem{lemma}[theorem]{Lemma}
\newtheorem{proposition}[theorem]{Proposition}

\theoremstyle{definition}

\newtheorem{remark}[theorem]{Remark}

\numberwithin{equation}{section}
\numberwithin{figure}{section}

\newcommand{\eps}{\varepsilon}
\newcommand{\e}{\varepsilon}

\newcommand{\R}{\mathbb {R}}
\newcommand{\N}{\mathbb N}
\newcommand{\A}{\mathcal A}
\newcommand{\Z}{\mathbb Z}
\newcommand{\wto}{\rightharpoonup}
\newcommand{\dom}{\operatorname{dom}}
\newcommand{\argmin}{\operatorname{argmin}}
\newcommand{\loc}{\operatorname{loc}}

 \newcommand{\ccb}{\color{black}}

\title{On Lennard-Jones systems with finite range interactions and their asymptotic analysis}
\author{M. Schäffner\footnote{TU Dresden, Department of Mathematics, Zellescher Weg 12-14, 01069 Dresden, Germany mathias.schaeffner@tu-dresden.de}, A. Schlömerkemper\footnote{University of W\"urzburg, Institute of Mathematics, Emil-Fischer-Stra\ss{}e 40, 97074 Würzburg, Germany  anja.schloemerkemper@mathematik.uni-wuerzburg.de}}

\begin{document}

\maketitle
\begin{abstract}
The aim of this work is to provide further insight into the qualitative behavior of mechanical systems that are well described by Lennard-Jones type interactions on an atomistic scale. By means of $\Gamma$-convergence techniques, we study the continuum limit of one-dimensional chains of atoms with finite range interactions of Lennard-Jones type, including the classical Lennard-Jones potentials. So far, explicit formulae for the continuum limit were only available for the case of nearest and next-to-nearest neighbour interactions. In this work, we provide an explicit expression for the continuum limit in the case of finite range interactions. The obtained homogenization formula is given by the convexification of a Cauchy-Born energy density. 

Furthermore, we study 
rescaled energies in which bulk and surface contributions scale in the same way. The related discrete-to-continuum limit yields a rigorous derivation of a one-dimensional version of Griffith' fracture energy and thus generalizes earlier derivations for nearest and next-to-nearest neighbors to the case of finite range interactions.

A crucial ingredient to our proofs is a novel decomposition of the energy that allows for refined estimates.

\end{abstract}

\section{Introduction}

Our article follows the general aim of deriving continuum theories for mechanical systems from underlying discrete systems, see e.g.\ \cite{Ball}. Here, we are interested in discrete systems with non-convex interaction potentials that allow for fracture of mechanical systems.  One of the first contributions in this direction is due to Truskinovsky \cite{T96}. In that article a chain of atoms which interact by Lennard-Jones potentials is considered and a model for fracture is derived. Later this approach was extended by using the notion of $\Gamma$-convergence in \cite{BDG99,BG02,BG04}. In order to capture surface effects, a refined analysis was performed based on calculating the first order $\Gamma$-limit, see \cite{BC07,SSZ11}, or on studying suitably rescaled energies \cite{BG15,BLO,BS14,BT,FS14,FS15,SSZ12}. 



The main scope of the present paper is to provide a rather explicit description of limiting functionals for discrete systems with Lennard-Jones type interactions of finite range. To make this more precise, we fix some notation. We consider a chain of $n{+}1$ lattice points with $n\in\N$. The interaction of lattice points with distance $j\frac{1}{n}$ in the reference lattice is described by a potential $J_j$, $j\in\{1,\ldots,K\}$ with $K\in \N$. The mathematical assumptions on the potentials $J_j$, $j=1,\ldots,K$, are phrased in Section~\ref{Sec:setting}. In particular these are satisfied if $J_j(z)=J(jz)$ for some Lennard-Jones potential $J(z) = \frac{k_1}{z^{12}} - \frac{k_2}{z^6}$, $k_1,k_2>0$ if $z>0$, and $+\infty$ otherwise. Therefore, we call the potentials which satisfy our assumptions potentials of Lennard-Jones type. The free energy of the system under consideration is the sum of all pair interactions up to range $K$ with the canonical bulk scaling. It reads 
$$H_n(u)=\sum_{j=1}^K\sum_{i=0}^{n-1}\lambda_nJ_j\left(\frac{u^{i+j}-u^i}{j\lambda_n}\right),$$
where $\lambda_n :=\frac1n$ and $u^i$ denotes the deformation of the $i$th lattice point satisfying certain periodic boundary conditions on $[0,1)\cap \lambda_n \mathbb{Z}$ with $u$ being its piecewise affine interpolation.

We are interested in the asymptotic behavior of the system as $n\to \infty$ and therefor consider the $\Gamma$-limit of the sequence of functionals $(H_n)$, see Section~\ref{sec:Hn}. The $\Gamma$-limit of discrete functionals of the form of $H_n$ was derived under very general assumptions on the interaction potentials in \cite{BG04}. The $\Gamma$-limit result of \cite[Theorem 3.2]{BG04} phrased for Lennard-Jones type potentials asserts that $(H_n)$ $\Gamma$-converges to an integral functional $H$, which is defined on the space of functions of bounded variation and has the form
\begin{align} \label{GammalimitH}
H(u)=\int_0^1 \phi(u'(x))dx,
\end{align}
where $\phi$ is defined via some homogenization process which involves minimization of larger and larger 'cells', see Remark~\ref{rem:stella} below for details. In the special case of Lennard-Jones potentials, $\phi$ reads
\begin{align} \label{formula-phi}
 \phi(z):=\lim_{N\to\infty}\min\bigg\{&\frac{1}{N}\sum_{j=1}^K\sum_{i=0}^{N-j}J\left( u^{i+j}-u^i\right)\,|\notag\\
 &\qquad u:\N_0\to\R, u^i=zi\mbox{ if }i\notin\{K+1,\dots, N-K-1\}\bigg\},
\end{align}
cf.~\cite[Theorem~23]{BG07}. 

If $K=1$, $\phi=J_1^{**}$, where $J^{**}$ denotes the lower semicontinuous and convex envelope of $J$, see e.g.\ \cite{BG02}. If $K=2$ it was shown that $\phi = (J_1 + J_2)^{**}$ for Lennard-Jones systems, see e.g.~\cite{BG04}. In this article we extend this result to $K>2$ and prove
$$\phi=J_{CB}^{**},$$ 
with the Cauchy-Born energy density
$$J_{CB}=\sum_{j=1}^KJ_j.$$ 
Thus, the formula \eqref{formula-phi} for $\phi$ has a rather explicit expression for the large class of Lennard-Jones type potentials. Our result also extends a corresponding formula in the case of convex interaction potentials, see \cite{BGS,GPPS}, to a class of nonconvex interaction potentials.   

Let us recall the following major difference between $K>2$ and the case of nearest and next-to-nearest neighbor interactions, i.e.~$K=2$: In the latter case and for rather general interaction potentials $J_j$, the limiting energy density $\phi$ is given by the 'single cell formula' $\phi\equiv J_0^{**}$,  where $J_0$ is an effective potential given by the following infimal convolution-type formula, which takes possible oscillations on the lattice-level into account 
$$J_0(z):=J_2(z)+\tfrac12\inf\{J_1(z_1)+J_1(z_2),z_1+z_2=2z\},$$
see e.g.~\cite[Remark 3.3]{BG04}. With this formula at hand it is not difficult to show that in the case of Lennard-Jones type potentials there are essentially no oscillations on the lattice-level and it holds $\phi(z)=(J_1+J_2)^{**}(z)=J_{CB}^{**}(z)$. However, up to our knowledge, there has not been a result in the literature yet which asserts whether or how the formula for the effective potential $J_0$ extends to a larger interaction range, i.e., to $K>2$.

The key idea in our proof for general finite range interactions is to bypass the absence of a 'single cell formula' for $\phi$ by carefully decomposing the energy into sub-systems, which are then considered separately. For each of such sub-systems an effective potential similar to $J_0$, given above, is available. Appealing to the special convex/concave shape of the Lennard-Jones type potentials it is then possible to exclude oscillations on the lattice-level which justifies a posteriori the energy splitting and enables us to show $\phi=J_{CB}^{**}$ for Lennard-Jones type potentials. In Section~\ref{Sec:setting}, we describe this novel energy decomposition in detail and show $\phi=J_{CB}^{**}$ for Lennard-Jones type potentials in Section~\ref{sec:Hn}.

As an aside, we mention that the pointwise limit of $(H_n)$ in the spirit of \cite{BLBLi} yields a similar integral functional as above with $J_{CB}$ instead of $J_{CB}^{**}$. Hence, roughly speaking, the $\Gamma$--limit is the lower semicontinuous envelope of the pointwise limit in this case.

The above  $\Gamma$--limit result is of interest also in the analysis of computational techniques as the so called quasicontinuum method, see e.g.~\cite{LO,TOP}. The main idea of that method is to couple fully atomistic and continuum descriptions of solids. Many formulations of those models rely on the assumption that the effective energy of the continuum limit of discrete energies like $H_n$ is given by $J_{CB}$. The result of Theorem~\ref{zero2} makes it possible to extend the $\Gamma$-convergence analysis of certain quasicontinuum models for the case $K=2$, see \cite{SS14}, also to the case of general finite range interactions of Lennard-Jones type, see \cite{S15}.

\smallskip

The limiting functional $H$ in \eqref{GammalimitH} captures the bulk contributions of the energy. In order to capture also surface contributions due to the formation of cracks,  it is a well-known strategy to consider suitably rescaled functionals, cf., e.g., \cite{BLO,BS14,SSZ12}. In Section~\ref{sec:En} we thus consider rescaled functionals for which the contribution of elastic deformations and surface contributions due to jumps are on the same order of magnitude. We observe that a global minimizer of $H_n$ is given by the linear function $x\mapsto \gamma x$ with $\{\gamma\}=\argmin J_{CB}$, see \eqref{lb:hnu}.  We define a rescaled functional by
\begin{equation*}
 E_n(v):=\sum_{j=1}^K\sum_{i=0}^{n-1}J_j\left(\gamma+\frac{v^{i+j}-v^i}{j\sqrt{\lambda_n}}\right)-nJ_{CB}(\gamma)
\end{equation*}
with certain periodic boundary conditions, see below, where $v^i$ is a scaled version of the displacement of lattice point $i$ from its equilibrium configuration $\gamma i$, and $v$ is its piecewise affine interpolation. The above energy $E_n$ is a variation of the energy considered in \cite[Theorem~4]{BLO} for multibody potentials with finite range interaction, and in \cite[Theorem~6.1]{SSZ12} for nearest and next to nearest neighbor interactions and Dirichlet type boundary conditions. \ccb The result in \cite{BLO} seems not directly applicable to pair potentials as e.g.~the classical Lennard-Jones potentials if $K>2$, see Remark~\ref{blofail} below or \cite[Remark 3]{BLO} and \cite[Section 4]{BS14}. Combining the decomposition of the energy mentioned above with the line of arguments of \cite[Theorem~4]{BLO}, we prove that $(E_n)$ $\Gamma$-converges to a one-dimensional version $E$ of the Griffith energy for fracture in the case of Lennard-Jones type potentials and $K>2$:
\begin{equation*}
E(v)=\tfrac12J_{CB}''(\gamma)\int_0^1 v'(x)^2\,dx+\beta \# S_v, \quad [v](x)>0\mbox{ for $x\in S_v$,}
\end{equation*}
where $S_v$ is the jump set of $v$ and $\beta$ is some boundary layer energy given in \eqref{def:bij}, cf.\ Theorem~\ref{Th1} for details.

\section{Setting of the problem}\label{Sec:setting}

We consider a one-dimensional lattice given by $\lambda_n\mathbb Z$ with $\lambda_n=\frac{1}{n}$. We denote by $u:\lambda_n\mathbb Z\to \mathbb R$ the deformation of the atoms from the reference configuration and write $u(i\lambda_n)=u^i$ as shorthand. In the following, we identify a discrete deformation $u$ with its piecewise affine interpolation and consider for simplicity only deformations with a $1$-periodic derivative, i.e.~$u\in\A_n^{\#}(0,1)$, where  
\begin{align*}
 \A_n^\#(0,1):=\big\{u\in W_{\loc}^{1,\infty}(\R):u\mbox{ is affine on }(i,i+1)\lambda_n\mbox{ for all $i\in\Z$ and $u'$ is $1$-periodic}\big\}.
\end{align*}
For given $K\in\N$, we define a discrete energy of a deformation $u\in\A_n^\#(0,1)$ by
\begin{equation}\label{ene:bulk}
 H_n(u):=\sum_{j=1}^K\sum_{i=0}^{n-1}\lambda_nJ_j\left(\frac{u^{i+j}-u^i}{j\lambda_n}\right),
\end{equation}
where $J_j$, $j=1,\dots,K$ are potentials of Lennard-Jones type which satisfy
\begin{itemize}
 \item[(i)] $J_j$ is $C^2$ on its domain and $(0,+\infty)\subset\dom J_j$,
 \item[(ii)] $\lim\limits_{z\to-\infty}|z|^{-1}J_j(z)=+\infty$,
 \item[(iii)] $\lim\limits_{z\to+\infty}J_j(z)=0$,
 \item[(iv)] $\argmin\limits_z J_j(z) =\{\delta_j\}$, $J_j(\delta_j)<0$.  
\end{itemize}
Let $u\in\A_n^\#(0,1)$ and $j\in\{2,\dots,K\}$ be given. Appealing to the equality $u^{n+s}-u^{n+s-1}=u^s-u^{s-1}$ (a consequence of $1$-periodicity of $u'$), we can rewrite the nearest neighbour interactions in \eqref{ene:bulk} as follows:
\begin{align}\label{rewJ1}
 \sum_{i=0}^{n-1}J_1\left(\frac{u^{i+1}-u^i}{\lambda_n}\right)
=&\sum_{i=0}^{n-1}\frac{1}{j}\sum_{s=i}^{i+j-1}J_1\left(\frac{u^{s+1}-u^s}{\lambda_n}\right).
\end{align}
Let $c=(c_j)_{j=2}^K$ be such that $\sum_{j=2}^Kc_j=1$. Using \eqref{rewJ1}, we can rewrite the energy \eqref{ene:bulk} as 
\begin{align}\label{ene:rev0}
  H_n(u)
=&\sum_{j=2}^K\sum_{i=0}^{n-1}\lambda_n\left\{J_j\left(\frac{u^{i+j}-u^i}{j\lambda_n}\right)+\frac{c_j}{j} \sum_{s=i}^{i+j-1}J_1\left(\frac{u^{s+1}-u^s}{\lambda_n}\right)\right\}.
\end{align}
For given $j\in\{2,\dots,K\}$, we define the following functions
\begin{equation}\label{J0j}
 J_{0,j}(z):=J_{j}(z)+\frac{c_j}{j}\inf\left\{\sum_{s=1}^jJ_1(z_s),~\sum_{s=1}^jz_s=jz\right\}.
\end{equation}
Note that the definition of $J_{0,j}$ yields lower bounds for the terms in the curved brackets in \eqref{ene:rev0}. Let us remark that in the case of nearest and next-to-nearest neighbour interactions, i.e.~$K=2$, we have $c_2=c_K=1$ and 
$$J_{0,2}(z)=J_2(z)+\frac12\inf\{J_1(z_1)+J_1(z_2):~z_1+z_2=2z\},$$
which is exactly the effective energy density which shows up in \cite{BC07,SSZ11,SSZ12}, and similarly in \cite{BS14}.\\
 Next, we formulate further assumptions on the potentials $J_j$ in terms of the functions $J_{0,j}$:  
\begin{enumerate}
\item[(v)] There exists $c=(c_j)_{j=2}^K\in\R^{K-1}_+$ such that $\sum_{j=2}^Kc_j=1$, and $J_{0,j}$ defined in \eqref{J0j} satisfies the hypotheses (vi), (vii), and (viii) for $j\in\{2,\dots,K\}$. 
\item[(vi)] There exists a unique $\gamma>0$, independent of $j$, such that 
\begin{equation}\label{gamma}
 \{\gamma\}=\argmin_{z\in\R} J_{0,j}(z).
\end{equation}
Furthermore, $J_{0,j}''(\gamma)>0$ and there exists $\eps>0$, independent of $j$, such that
\begin{equation}\label{ass:unique1}
\{(z,\dots,z)\}=\argmin\left\{\sum_{s=1}^jJ_1(z_s),~\sum_{s=1}^jz_s=jz\right\}\qquad\mbox{for all $z\leq \gamma+\eps$}.
\end{equation}
\item[(vii)] There exists $\eta>0$ and $C>0$ such that
\begin{equation}\label{cb}
 J_j(z)+\frac{c_j}{j}\sum_{s=1}^jJ_1(z_s)\geq J_j(z)+c_jJ_1(z)+C\sum_{s=1}^j(z_s-z)^2
\end{equation}
whenever $\sum_{s=1}^jz_s=jz$ and $\sum_{s=1}^j|z_s-z|+|z-\gamma|\leq\eta$.
\item[(viii)] $\liminf\limits_{z\to+\infty}J_{0,j}(z)>J_{0,j}(\gamma)$.
\end{enumerate}

\begin{remark}
Note that a direct consequence of hypothesis (vi) is
\begin{equation}
 J_{0,j}(z)=J_j(z)+c_jJ_1(z)=:\psi_j(z)\quad\mbox{for all $z\leq \gamma+\eps$ and}\quad \psi_j''(\gamma)>0\label{def:psij}
\end{equation}
for all $j\in\{2,\dots,K\}$.
\end{remark}

Assumptions (v)--(vii) are tailor-made in order to rule out certain microscopic relaxation effects which in general might occur for discrete systems with non-convex interaction potentials, see Remark~\ref{rem:stella}. We will show in Proposition \ref{prop:lj} that the classical Lennard-Jones potentials indeed satisfy assumptions (i)--(viii). In Section~\ref{sec:Hn}, we provide a rather explicit expression of the $\Gamma$-limit of $H_n$ subject to additional periodic boundary conditions.\footnote{The last sentence does not seem to make sense here. Can we delete it?}

Note that \eqref{ene:rev0} and the assumptions (v) and (vi) imply
\begin{equation}\label{lb:hnu}
 H_n(u)\geq \sum_{j=2}^K\sum_{i=0}^{n-1}\lambda_n J_{0,j}\left(\frac{u^{i+j}-u^i}{j\lambda_n}\right)\geq \sum_{j=2}^K(J_j(\gamma)+c_jJ_1(\gamma))=\sum_{j=1}^KJ_j(\gamma).
\end{equation}
Hence, $u_{\min}(x)=\gamma x$ is a minimizer of $H_n$ for all $n\in\N$. Let us now consider deformations $u\in\A_n^{\#}(0,1)$ which are close to the equilibrium configuration $u_{\min}$. To this end, set $v^i=\frac{u^i-\lambda_n\gamma i}{\sqrt{\lambda_n}}$ and define
\begin{equation*}
 E_n(v):=\sum_{j=1}^K\sum_{i=0}^{n-1}J_j\left(\gamma+\frac{v^{i+j}-v^i}{j\sqrt{\lambda_n}}\right)-nJ_{CB}(\gamma)=\frac{H_n(u_{\min}+\sqrt{\lambda_n}v)-\inf H_n}{\lambda_n},
\end{equation*}
where $J_{CB}$ is defined by
\begin{equation}\label{def:jcb}
 J_{CB}(z):=\sum_{j=1}^KJ_j(z).
\end{equation}
In Section \ref{sec:En}, we derive a $\Gamma$--limit of $E_n$ as $n$ tends to infinity under additional boundary conditions. We define the sequence of functionals $(E_n^\ell)$ by
\begin{equation}\label{ene:rescaledbc}
 E_n^{\ell}(v):=\begin{cases}
                   E_n(v)&\mbox{if $v\in\A_n^{\#,\ell}(0,1)$,}\\
		   +\infty&\mbox{else,}
                  \end{cases}
\end{equation}
where 
\begin{equation}\label{def:anell}
 \A_n^{\#,\ell}(0,1):=\{v\in \A_n^{\#}(0,1):\mbox{ $x\mapsto v(x)-\ell x$ is $1$-periodic}\}.
\end{equation}
In Theorem \ref{Th1}, we derive the $\Gamma$--limit of the sequence $(E_n^\ell)$ as $n$ tends to infinity.\\

Next we show that the assumptions (i)--(viii) are reasonable in the sense that they are satisfied by the classical Lennard-Jones potentials.
\begin{proposition}\label{prop:lj}
 For $j\in\{1,\dots,K\}$ let $J_j$ be defined as 
\begin{equation}\label{def:lj}
 J_j(z)=J(jz)\quad\mbox{with}\quad J(z)=\frac{k_1}{z^{12}}-\frac{k_2}{z^6},\quad\mbox{for $z>0$ and $J(z)=+\infty$ for $z\leq0$}
\end{equation}
and $k_1,k_2>0$. Then there exists $(c_j)_{j=2}^K$ such that hypotheses (i)--(viii) are satisfied. Moreover, it holds that $\dom J_j=(0,+\infty)$ for $j\in\{1,\dots,K\}$ and that for all $z>0$ and $j\in\{2,\dots,K\}$
\begin{equation}\label{psienv}
 J_{0,j}^{**}(z)=\psi_j^{**}(z)=\begin{cases}
                                  \psi_j(z)&\mbox{if $z\leq\gamma$,}\\
				  \psi_j(\gamma)&\mbox{if $z>\gamma$}.
                                 \end{cases}
\end{equation}
\end{proposition}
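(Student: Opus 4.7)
The verification of hypotheses (i)--(iv) is routine from the explicit form of $J$: it is $C^\infty$ on $(0,+\infty)$, blows up at $0^+$, vanishes at $+\infty$, and has a unique minimum at $z^*=(2k_1/k_2)^{1/6}$ with $J(z^*)=-k_2^2/(4k_1)<0$; scaling yields $\delta_j=z^*/j$ for $J_j$. For the remaining claims the strategy is to identify $\gamma$ and the coefficients $(c_j)_{j=2}^K$ by explicit formulas, after which everything follows from closed-form manipulations.

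The plan is to take $\gamma$ to be the unique positive root of $J_{CB}'(z)=0$. From $J'(z)=6z^{-13}(k_2 z^6-2k_1)$ one reads off $\gamma^6=(2k_1/k_2)\,S_{12}/S_6$ with $S_p:=\sum_{j=1}^K j^{-p}$, and then one sets $c_j:=-J_j'(\gamma)/J_1'(\gamma)$ for $j\ge 2$. A sign analysis based on $\gamma<z^*<j\gamma$ gives $c_j>0$, and $J_{CB}'(\gamma)=0$ forces $\sum_{j=2}^K c_j=1$. By construction $\psi_j'(\gamma)=0$; the key computation is the factorization
\begin{equation*}
\psi_j''(z) \;=\; \frac{6\,k_2}{z^{14}}\,(c_j+j^{-6})\,\bigl(13\gamma^6 - 7z^6\bigr),
\end{equation*}
which I would derive from the explicit identities $j^{-12}+c_j = S_{12}(j^{-6}-j^{-12})/(S_6-S_{12})$ and $j^{-6}+c_j = S_6(j^{-6}-j^{-12})/(S_6-S_{12})$. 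This shows that $\psi_j$ is strictly convex on $(0,(13/7)^{1/6}\gamma]$ and in particular $\psi_j''(\gamma)>0$; combined with closed-form expressions yielding $\psi_j(\gamma)<0$ and $\psi_j(z)\to 0$ as $z\to+\infty$, this identifies $\gamma$ as the unique global minimizer of $\psi_j$.

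The identification $J_{0,j}=\psi_j$ near $\gamma$ required by (vi) reduces to showing that $\inf\{\sum_{s=1}^j J_1(z_s):\sum_{s=1}^j z_s=jz\}$ is uniquely attained on the diagonal for $z\le\gamma+\eps$: on the convex region $(0,z^{**})$, where $z^{**}=(26k_1/(7k_2))^{1/6}>\gamma$, this is Jensen, and a boundary analysis using $J_1(z_s)\to+\infty$ at $0^+$ and $J_1(z_s)\to 0$ at $+\infty$ rules out configurations with some coordinate outside $(0,z^{**})$ for $\eps$ small enough. Hypothesis (vii) then follows from a Taylor expansion of $J_1$ at $\gamma$ with constant $C$ slightly below $J_1''(\gamma)/2$, and for (viii) I would compute $\lim_{z\to\infty} J_{0,j}(z)=c_j(j-1)J_1(\delta_1)/j$ (by placing $j-1$ coordinates near $\delta_1$ and letting the last tend to $+\infty$) and verify the closed-form inequality $c_j(j-1)J_1(\delta_1)/j>\psi_j(\gamma)$.

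For the envelope formula \eqref{psienv}, strict convexity of $\psi_j$ on $(0,\gamma]$ combined with the concave-but-increasing behaviour of $\psi_j$ on $((13/7)^{1/6}\gamma,+\infty)$ shows via the tangent-line characterization that $\psi_j^{**}=\psi_j$ on $(0,\gamma]$, while the unique-minimum property with $\psi_j\to 0>\psi_j(\gamma)$ at $+\infty$ forces $\psi_j^{**}\equiv\psi_j(\gamma)$ on $[\gamma,+\infty)$. The identity $J_{0,j}^{**}=\psi_j^{**}$ then follows from the sandwich $\psi_j^{**}\le J_{0,j}\le\psi_j$ (the right inequality trivial by the diagonal competitor, the left from $J_{0,j}=\psi_j$ on $(0,\gamma]$ combined with $J_{0,j}\ge J_{0,j}(\gamma)=\psi_j(\gamma)$ on $[\gamma,+\infty)$ via (vi)). The main obstacle will be the uniqueness of the infimal-convolution minimizer for $z$ slightly above $\gamma$: $J_1$ is neither globally convex nor coercive, and the asymptotic configuration with a single large coordinate becomes competitive as $z$ grows, so closing the argument requires explicit use of the specific value of $\gamma$ rather than abstract convexity alone.
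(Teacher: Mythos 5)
Your strategy matches the paper's at every step where you are confident: $\gamma$ is the minimizer of $J_{CB}$, $c_j=-J_j'(\gamma)/J_1'(\gamma)$ with $\sum_{j\ge 2}c_j=1$ from stationarity of $J_{CB}$, and the explicit identities you propose for $j^{-12}+c_j$, $j^{-6}+c_j$ and the resulting factorization of $\psi_j''$ are correct --- a nice supplement to the paper's terse remark that $\psi_j$ ``is again a Lennard-Jones potential.'' The Taylor expansion for (vii), the computation of $\lim_{z\to\infty}J_{0,j}(z)=c_j\tfrac{j-1}{j}J_1(\delta_1)$ for (viii), and the sandwich $\psi_j^{**}\le J_{0,j}\le\psi_j$ for the envelope formula are likewise what the paper does.

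The step you yourself flag as ``the main obstacle'' --- uniqueness of the diagonal minimizer required in (vi) --- is where your sketch has a genuine gap, and the Jensen-plus-boundary-analysis route you propose is the hard way to close it: for large $j$ the constraint $\sum_s z_s=jz$ allows individual $z_s$ far above the convexity threshold $z_c=(13/7)^{1/6}\delta_1$ even when $z$ is near $\gamma$, so ruling such configurations out by direct value comparison is delicate and $j$-dependent. The paper avoids this entirely. Since $\gamma<\delta_1$, it verifies the property on all of $(-\infty,\delta_1]$, i.e.\ with $\eps=\delta_1-\gamma$: the open simplex $\{z_s>0,\ \sum_s z_s=jz\}$ is bounded and $J_1(z_s)\to+\infty$ as $z_s\to 0^+$, so the infimum is attained at an interior critical point; the stationarity condition gives $J_1'(z_s)=\lambda$ for every $s$; from $\sum_s z_s=jz\le j\delta_1$ some $z_{\bar s}\le\delta_1$, hence $\lambda=J_1'(z_{\bar s})\le 0$, which rules out any $z_s>\delta_1$ (where $J_1'>0$); and $J_1'$ is injective on $(0,\delta_1]$ by strict convexity there, forcing all $z_s$ equal. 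This is exactly the concrete use of the inequality $\gamma<\delta_1$ that you anticipated would be needed, and it requires no boundary comparison at all.
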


\begin{proof}
By the definition of $J_j$, $j=1,\dots,K$ it is clear that they satisfy (i)--(iv) and $\dom J_j=(0,+\infty)$. Note that the unique minimizer $\delta_j$ of $J_j$ is given by
\begin{equation}\label{LJ:deltaj}
 \delta_j=\frac{1}{j}\left(\frac{2k_1}{k_2}\right)^{1/6},
\end{equation}
and $J$ is strictly convex on $(0,z_c)$ with $z_c=(\frac{13}{7})^{\frac16}\delta_1>\delta_1$. Let us show (v)--(viii). The function $J_{CB}$ is given by
$$J_{CB}(z)=\sum_{j=1}^KJ(jz)=\frac{k_1}{z^{12}}\sum_{j=1}^K\frac{1}{j^{12}}-\frac{k_2}{z^6}\sum_{j=1}^K\frac{1}{j^{6}}.$$
Hence, $J_{CB}$ is also a Lennard-Jones potential with the unique minimizer 
\begin{equation}\label{gammaK}
 \gamma=\left(\frac{2k_1}{k_2}\right)^{1/6}\left(\frac{\sum_{j=1}^K\frac{1}{j^{12}}}{\sum_{j=1}^K\frac{1}{j^{6}}}\right)^{1/6}<\delta_1.
\end{equation}
It can be checked that $J'(\gamma)<0$ and $J'(j\gamma)>0$ for every $j\geq2$. We define $(c_j)_{j=2}^K$ as
\begin{equation}\label{cjlj}
 c_j:=-\frac{jJ'(j\gamma)}{J'(\gamma)}>0.
\end{equation}
Since $\gamma$ is the minimizer of $J_{CB}$, we have $\sum_{j=2}^KjJ'(j\gamma)=-J'(\gamma)$ and thus $\sum_{j=2}^Kc_j=1$.

Next, we show that $J_j$, $j=1,\dots,K$ satisfy (vi)--(viii) with $c_j$ given by \eqref{cjlj} and $\gamma$ given by \eqref{gammaK}. For this, we fix $j\in\{2,\dots,K\}$.
\begin{itemize}
 \item \textit{Argument for} (vi). Consider $z\leq \delta_1$. Since $J$ is decreasing on $(0,\delta_1)$ and increasing on $(\delta_1,\infty)$, the minimum problem in \eqref{ass:unique1} admits a minimizer  $\bar z=(\bar z_1,\dots,\bar z_j)$ satisfying $\bar z_i\in(0,\delta_1]$ for all $i=1,\dots,j$. In combination with the strict convexity of $J$ in $(0,\delta_1]$, we obtain that $\bar z_i=z$ for all $i=1,\dots,j$. Hence, \eqref{ass:unique1} is true with $\e=\delta_1-\gamma>0$, see \eqref{gammaK}. Next, we show that $\gamma$ is the unique minimizer of $J_{0,j}$. Since $J_{0,j}(z)\geq J(jz)+c_jJ(\delta_1)\geq J_{0,j}(\delta_1)$ for $z\geq \delta_1$ it suffices to consider $z\leq\delta_1$ in order to find the minimum of $J_{0,j}$. We already showed $J_{0,j}(z)=J(jz)+c_jJ(z)=\psi_j(z)$ for $z\leq\delta_1$ and thus
$$J_{0,j}(z)=\psi_j(z)=\frac{k_1}{z^{12}}\left(\frac{1}{j^{12}}+c_j\right)-\frac{k_2}{z^{6}}\left(\frac{1}{j^{6}}+c_j\right)\quad\mbox{for $z\leq \delta_1$.}$$
Hence, $\psi_j$ is again a Lennard-Jones potential with a single critical point which is a minimum. Since $c_j$ is defined such that $jJ'(\gamma)+c_jJ'(\gamma)=0$, we deduce that $\gamma$ is the unique minimizer of $\psi_j$ and since $\gamma<\delta_1$ also of $J_{0,j}$.
\item \textit{Argument for} (vii). Let $z$ and $z_s$ be such that $jz=\sum_{s=1}^jz_s$. A Taylor expansion yields 
\begin{align*}
 \sum_{s=1}^jJ(z_s)=&jJ(z)+J'(z)\sum_{s=1}^j(z_s-z)+\sum_{s=1}^j\int_0^1(1-t)J''(z+t(z_s-z))(z_s-z)^2\,dt.
\end{align*}
The second term on the right-hand side vanishes since $\sum_{s=1}^jz_s=jz$. For $\eta>0$ sufficiently small, e.g.\ $\eta<|\gamma-\delta_1|$, we have $J''(z+t(z_s-z))\geq \inf_{z\leq\delta_1} J''(z)>0$ for all $t\in[0,1]$ and $s=1,\dots,j$, which proves the assertion.
\item \textit{Argument for} (viii). Let $(z_n)$ be such that $\lim_{n\to\infty}z_n=+\infty$ and
$$\liminf_{z\to\infty}J_{0,j}(z)=\lim_{n\to\infty}J_{0,j}(z_n).$$
For every $\eta>0$ and $n\in\N$, we find $z_n^s$ with $s=1,\dots,j$ such that
$$J_{0,j}(z_n)\geq J(jz_n)+\frac{c_j}{j}\sum_{s=1}^jJ(z_n^s)-\eta\quad\mbox{with}\quad \sum_{s=1}^jz_n^s=jz_n.$$
Since $z_n\to\infty$ and $J_1(z)=+\infty$ for $z\leq0$, there exists $s\in\{1,\dots,j\}$ such that, up to subsequences, $z_n^s\to+\infty$ as $n\to\infty$. Without loss of generality we assume that $s=1$ and from $\lim_{z\to\infty}J(z)=0$, it follows
$$\liminf_{n\to\infty}J_{0,j}(z_n)\geq \frac{c_j}{j}\liminf_{n\to\infty}\sum_{s=2}^jJ(z_n^s)-\eta\geq c_j\frac{j-1}{j}J(\delta_1)-\eta.$$
Since $J(j\delta_1)<0$ for $j=1,\dots,K$ the assertion follows by choosing $\eta=-\frac12J_j(\delta_1)$ and 
\begin{align*}
 c_j\frac{j-1}{j}J(\delta_1)-\eta>c_j\frac{j-1}{j}J(\delta_1)-\eta+\frac12J(j\delta_1)+\frac{c_j}{j}J(\delta_1)= J(j\delta_1)+c_jJ(\delta_1)> \psi_j(\gamma),
\end{align*}
and since $\psi_j(\gamma)=J_{0,j}(\gamma)$, the assertion is proven.
\end{itemize}

Finally, we comment on identity \eqref{psienv}. We already observed that $\psi_j$ are Lennard-Jones potentials with minimizer $\gamma$, and thus the second equality in \eqref{psienv} follows. The first equality is true since one can easily check that $\psi_j^{**}\leq J_{0,j}\leq \psi_j$.
\end{proof}

\begin{remark}
The proof of Proposition~\ref{prop:lj} can be applied almost verbatim also to slightly more general potentials of the form 
 \begin{equation*}
 J_j(z)=J(jz)\quad\mbox{with}\quad J(z)=\frac{k_1}{z^{m}}-\frac{k_2}{z^n},\quad\mbox{for $z>0$ and $J(z)=+\infty$ for $z\leq0$},
\end{equation*}
$k_1,k_2>0$, and $m>n>1$.
\end{remark}

\begin{remark}
 If $J_j$, $j\in\{1,\dots,K\}$ satisfy (i)--(viii) and \eqref{psienv}, then it is easy to see that $\{\gamma\}=\argmin J_{CB}$ and
\begin{equation}\label{jcbpsi}
 J_{CB}^{**}(z)=\sum_{j=2}^KJ_{0,j}^{**}(z)=\sum_{j=2}^K\psi_{j}^{**}(z)=\begin{cases}
                                           J_{CB}(z)&\mbox{if $z\leq \gamma$}\\
					   J_{CB}(\gamma)&\mbox{if $z\geq \gamma$}.
                                          \end{cases}
\end{equation}
\end{remark}

\section{$\Gamma$-limit of $H_n$}\label{sec:Hn}

In this section, we give an explicit expression for the $\Gamma$-limit of discrete energies $H_n$, see \eqref{ene:bulk}, with periodic boundary conditions. More precisely, for fixed $\ell>0$ we define $H_n^{\ell}:\A_n^{\#,\ell}(0,1)\to\R\cup\{+\infty\}$ as
\begin{equation*}
 H_n^{\ell}(u):=\begin{cases}H_n(u)&\mbox{if $u\in \A_n^{\#,\ell}(0,1)$,}\\+\infty&\mbox{else,}\end{cases}
\end{equation*}
where $ \A_n^{\#,\ell}(0,1)$ is given in \eqref{def:anell}. Moreover, we set
\begin{equation*}
 BV^{\ell}(0,1):=\{u\in BV_{\loc}(\R)\,:\,x\mapsto u(x)-\ell x\quad \mbox{is $1$-periodic}\}.
\end{equation*}
\begin{theorem}\label{zero2}
Let $J_j:\R\to(-\infty,+\infty]$ be Borel functions. Assume there exists a convex function $\Psi:\R\to[0,+\infty]$ such that
\begin{equation}\label{ass:Psi:1}
 \lim_{z\to-\infty}\frac{\Psi(z)}{|z|}=+\infty
\end{equation}
and there exist constants $d^1,d^2>0$ such that
\begin{equation}\label{ass:Psi:2}
d^1(\Psi(z)-1)\leq J_j(z)\leq d^2\max\{\Psi(z),|z|\}\mbox{ for all $z\in\R$.}\end{equation}
Moreover, assume that the $J_j$ satisfy the assumptions (i)--(vi) and \eqref{psienv}. Then, for $\ell>0$, the $\Gamma$-limit of the sequence $(H_n^\ell)$ with respect to the $L_{\loc}^1$-topology is given by 
\begin{equation*}
 H^{\ell}(u):=\begin{cases}
            \displaystyle{\int_0^1} J_{CB}^{**}(u'(x))\,dx&\mbox{if $u\in BV^{\ell}(0,1)$, $D^su\geq0$,}\\
	    +\infty&\mbox{else on $L_{\loc}^1(\R)$, }
           \end{cases}
\end{equation*}
where $D^su$ denotes the singular part of the measure $Du$ with respect to the Lebesgue measure.
\end{theorem}

\begin{remark}\label{rem:stella}
(a) As discussed in the introduction, the $\Gamma$-limit of discrete energies as $(H_n)$ with very general interaction potentials is provided in \cite[Theorem~3.4]{BG04}. Specialized to the situation of Theorem~\ref{zero2}, the limiting energy density in \cite[Theorem~3.4]{BG04} reads
\begin{equation*}
 \overline \phi(z):=\inf\{\phi(z_1)+g(z_2):z_1+z_2=z\}\quad\mbox{where}\quad \phi=\Gamma\text{-}\lim\limits_{N\to\infty}\phi_N^{**},\quad g(z)=\begin{cases}
                                                                                  0&\mbox{if $z\geq0$,}\\+\infty &\mbox{if $z<0$},
                                                                                 \end{cases}
\end{equation*}
with
\begin{align*}
 \phi_N(z)=&\min\bigg\{\frac{1}{N}\sum_{j=1}^K\sum_{i=0}^{N-j}J_j\left( \frac{u^{i+j}-u^i}{j}\right):\notag\\
     &\qquad u:\N_0\to\R, u^i=zi\mbox{ if }i\in\{0,\dots,K\}\cup\{N-K,\dots,N\}\bigg\}. 
\end{align*}

For non-convex interaction potentials, such as the Lennard-Jones potentials, one cannot expect a simplification of the asymptotic homogenization formulas $\overline \phi$ and $\phi$ in general. In fact, the assumptions (v) and (vi) are essential in the simplification of $\overline \phi$ and $\phi$. These assumptions follow for instance from the specific convex-concave shape of the Lennard-Jones potentials.

\smallskip

(b) Theorem~\ref{zero2} follows by showing that $J_{CB}^{**}=\overline \phi$ and adjusting the argument of \cite{BG04} to the present boundary conditions. Here, however, we give a direct proof of Theorem~\ref{zero2} which, by appealing to assumptions (i)--(vi), significantly simplifies compared to \cite{BG04}.  
\end{remark}

\bigskip

\begin{proof}[Proof of Theorem~\ref{zero2}]

{\bf Liminf inequality.} Let $(u_n)\subset W_{\loc}^{1,\infty}(\R)$ be such that $\sup_n H_n^\ell(u_n)<\infty$ and $u_n\to u$ in $L^1_{\loc}$ for some $u\in L^1_{\loc}(\R)$. The growth condition at $-\infty$ of the potentials $J_j$, cf.~\eqref{ass:Psi:1} and \eqref{ass:Psi:2}, implies that $(u_n')^-:=-(u_n'\wedge0)$, where $a\wedge b:=\min\{a,b\}$, satisfies $\sup_n \|(u_n')^-\|_{L^1(0,1)}<\infty$. Combining this with the periodicity of the map $x\mapsto u_n(x)-\ell x$ and $u_n\to u$ in $L^1_{\loc}(\R)$, we obtain $\sup_n\|u_n\|_{W^{1,1}(I)}<\infty$ for every bounded interval $I\subset\R$. Since bounded sequences in $W^{1,1}(I)$ are weakly$^*$ compact in $BV(I)$, we obtain, up to subsequences, $u_n\stackrel{*}{\wto} u$ weakly$^*$ in $BV(I)$ for every bounded interval $I\subset \R$. In particular, this implies $u\in BV_{\loc} (\R)$. Moreover, after extracting a further subsequence, we have that $u_n\to u$ pointwise a.e.~and in combination with the periodicity of $x\mapsto u_n(x)-\ell x$ that $u\in BV^{\ell}(0,1)$.

\smallskip

Let us now estimate the energy. By \eqref{lb:hnu}, we have 

\begin{align*}
  H_n^\ell(u_n)
  \geq&\sum_{j=2}^K\sum_{i=0}^{n-1}\lambda_nJ_{0,j}^{**}\left(\frac{u_n^{i+j}-u_n^i}{j\lambda_n}\right)=\sum_{j=2}^K\frac1j\sum_{s=0}^{j-1}\sum_{i\in (s+j\Z)\cap [0,n)}j\lambda_nJ_{0,j}^{**}\left(\frac{u_n^{i+j}-u_n^i}{j\lambda_n}\right).
\end{align*}
Fix $\rho\in(0,\frac14)$. For all $n$ sufficiently large, we obtain from $J_{0,j}\geq J_{0,j}(\gamma)$ that 
\begin{equation*}
 \sum_{i\in (s+j\Z)\cap [0,n)}j\lambda_nJ_{0,j}^{**}\left(\frac{u_n^{i+j}-u_n^i}{j\lambda_n}\right)\geq \int_\rho^1 J_{0,j}^{**}({u_{n,j}^s}'(x))\,dx+2\rho (J_{0,j}(\gamma)\wedge0),
\end{equation*}
where $u_{n,j}^s$ denotes the piecewise affine interpolation of $u_n$ with respect to the lattice $\lambda_n (s+j\Z)$, i.e.
\begin{equation}\label{def:vnjs}
 u_{n,j}^s(t):=u_n^{s+ji}+\frac{t-(s+ji)\lambda_n}{j\lambda_n}(u_n^{s+j(i+1)}-u_n^{s+ji})\quad\mbox{for $t\in \lambda_ns+\lambda_nj[i,i+1)$ with $i\in\Z$}.
\end{equation}
Using $u_n\stackrel{*}{\wto} u$ weakly$^*$ in $BV(-1,2)$, a straightforward calculation yields that $u_{n,j}^s$ converges weakly$^*$ in $BV(0,1)$ to $u$ for $j\in\{2,\dots,K\}$ and $s\in\{0,\dots,j-1\}$. Hence, a consequence of the superlinear growth at $-\infty$, sublinear growth at $+\infty$, \cite[Theorem 5.2]{AFP} and $\sum_{j=2}^KJ_{0,j}^{**}=J_{CB}^{**}$, see \eqref{jcbpsi}, is
\begin{align*}
 \liminf_{n\to\infty}\sum_{j=2}^K\frac1j\sum_{s=0}^{j-1}\int_\rho^{1-\rho}J_{0,j}^{**}({u_{n,j}^s}'(x))\,dx\geq \sum_{j=2}^K\frac1j\sum_{s=0}^{j-1}\int_\rho^{1-\rho}J_{0,j}^{**}(u'(x))\,dx=\int_\rho^{1-\rho} J_{CB}^{**}(u'(x))\,dx,
\end{align*}
and the constraint $D^su\geq0$ on $(\rho,1-\rho)$. Clearly the liminf inequality follows by letting $\rho$ tend to zero.

\medskip

{\bf Limsup inequality.} \textit{Step~1.} We provide the limsup inequality for a modified discrete energy which does not take the boundary conditions into account and is given by
\begin{equation*}
 \hat H_n(u):=\begin{cases} \displaystyle{\sum_{j=1}^K\sum_{i=0}^{n-j}}J_j\left(\frac{u^{i+j}-u^i}{j\lambda_n}\right)&\mbox{if $u\in \A_n(0,1)$,}\\+\infty&\mbox{else on $L^1(0,1)$,}
 \end{cases}
\end{equation*}
where 
\begin{equation*}
 \A_n(0,1):=\big\{u\in W^{1,\infty}(0,1):u\mbox{ is affine on }(i,i+1)\lambda_n\mbox{ for $i\in\{0,\dots,n\}$}\big\}.
\end{equation*}
We claim that for every $u\in BV(0,1)$ with $D^su\geq 0$, we find a sequence $(u_n)\subset W^{1,\infty}(0,1)$ such that $u_n\to u$ in $L^1(0,1)$ and 
\begin{equation*}
 \limsup_{n\to\infty} \hat H_n(u_n)\leq \int_0^1J_{CB}^{**}(u'(x))\,dx.
\end{equation*}
By density and relaxation arguments it suffices to provide the above inequality for the simpler cases of $u$ linear and of $u$ with a single jump, see e.g.~the proof of \cite[Theorem 3.5]{BG02} for a detailed discussion.

First, we consider functions $u$ with a single jump. Let $u(x) = zx + a\chi(x_0,1]$ with $z \leq \gamma$, $a > 0$, and $x_0\in[0,1]$.  Let $h_n\subset \Z$ be such that $x_0\in \lambda_n[h_n,h_n+1)$ and define $u_n\in \A_n(0,1)$ by $u_n^i=i z$ for all $i\leq h_n$ and $u_n^i=a+iz$ for $i> h_n$. Using (iii), \eqref{def:jcb} and $J_{CB}(z)=J_{CB}^{**}(z)$, we obtain by a direct calculation
\begin{equation*}
 \limsup_{n\to\infty} \hat H_n(u_n)\leq \sum_{j=1}^K J_j(z)=\int_0^1J_{CB}^{**}(u'(x))\,dx.
\end{equation*}
Let us now consider $u(x)=zx$ with $z>\gamma$. We construct a sequence $(u_n)$ converging to $u$ in $L^1(0,1)$ such that $u_n'$ converges to $\gamma$ in measure in $(0,1)$. Let $(N_n)\subset \N$ be such that 
\begin{equation*}
\lim_{n\to\infty} N_n=+\infty\qquad\mbox{and}\qquad \lim_{n\to\infty} \lambda_n N_n=0.
\end{equation*}
Moreover, we define a sequence $(r_n)\subset \N$ by
\begin{equation*}
 r_n:=\sup\{r\in\N\, :\, r N_n\leq n\}.
\end{equation*}
Set $t_n^i=iN_n$ for $i\in\{0,\dots,r_n-1\}$ and $t_n^{r_n}=n$. Define $u_n\in \A_n(0,1)$ such that 
\begin{equation*}
 u_n(x)=\begin{cases}u(\lambda_n t_n^i)+\gamma (x-\lambda_n t_n^i)&\mbox{for $x\in\lambda_n[t_n^i,t_n^{i+1}-1]$ and $i\in\{0,\dots,r_n-2\}$,}\\
 u(x)& \mbox{for $x\in [\lambda_n t_n^{r_n-1},1]$.}
 \end{cases}
\end{equation*}
By the definition of $u_n$ and $u$, we have $\|u_n-u\|_{L^\infty(0,1)}\leq \lambda_n N_n |z-\gamma|$ and thus $u_n\to u$ in $L^1(0,1)$. By construction, we have $u_n'(x)=\gamma$ for all $x\in (0,1)\setminus I_n$ with $I_n=(\cup_{i=1}^{r_n-1}\lambda_n[t_n^i-1,t_n^i])\cup [\lambda_n t_n^{r_n-1},1]$ and $u_n'\geq\gamma$ on $I_n$. Using $\lim_{n\to\infty}|I_n|=0$ and $(0,+\infty)\subset \dom J_j$ (see assumption (i)), we obtain
\begin{equation*}
 \limsup_{n\to\infty} \hat H_n(u_n)\leq \sum_{j=1}^K J_j(\gamma)=\int_0^1J_{CB}^{**}(u'(x))\,dx.
\end{equation*}

\textit{Step~2.} We show that there exists for every $u \in BV^\ell(0,1)$ a sequence $(u_n)$ such that $u_n\to u$ in $L^1(0,1)$ and $\limsup_{n\to\infty}H_n^\ell(u_n)\leq H^\ell(u)$. We follow ideas from \cite[Theorem 4.2]{BDG99}, where the case of nearest neighbor interactions and Dirichlet boundary conditions is considered. \\
Let us first consider functions with a jump at zero: Let $u\in BV^\ell(0,1)$ be such that $H^\ell(u)<\infty$, and $u(0-)<u(0+)$. By the previous step, we find a sequence $(u_n)$ such that $u_n\to u$ in $L^1(0,1)$ and
\begin{equation}\label{rec:without}
\limsup_{n\to\infty} \hat H_n(u_n)=\limsup_{n\to\infty}\sum_{j=1}^K\sum_{i=0}^{n-j}J_j\left(\frac{u_n^{i+j}-u_n^i}{j\lambda_n}\right)\leq \int_0^1J_{CB}^{**}(u'(x))\,dx.
\end{equation}
Next, we introduce a suitable pertubation of $(u_n)$ which takes the periodic boundary condition into account. By passing to a subsequence, it is not restrictive to assume that $u_n\to u$ pointwise a.e.~in $(0,1)$. Hence, for every $\hat \e>0$ there exists $\e\in(0,\hat \e)$ such that $\e,1-\e\notin S_u$, $u_n(\e)\to u(\e)$, and $u_n(1-\e)\to u(1-\e)$. For $\hat \e > 0$ sufficiently small, we deduce from $u(0-) < u(0+)$, \eqref{ass:Psi:1}, \eqref{ass:Psi:2}, $D^su\geq 0$  and the periodicity of $x\mapsto u(x)-\ell x$ that 
$$\tfrac12(u(0-)+u(0+))+2\e \gamma<u(\e),\quad u(1-\e)+ 2\e \gamma<\tfrac12(u(0-)+u(0+)) + \ell =\tfrac12(u(1-)+u(1+)) .$$
Let $(h_n^1),(h_n^2)\subset\N$ be such that $\e\in\lambda_n[h_n^1,h_n^1+1)$ and $1-\e\in\lambda_n(h_n^2-1,h_n^2]$. We define $v_{n,\e}\in\A_n^{\#,\ell}(0,1)$ as the unique function in $\A_n^{\#,\ell}(0,1)$ satisfying
\begin{equation*}
 v_{n,\e}^i=\begin{cases}
 \frac12(u(0-)+u(0+))+i\lambda_n\gamma&\mbox{for $0\leq i< h_n^1$}\\
 u_n(\e)-\frac12\gamma \e&\mbox{for $i=h_n^1$,}\\
 u_n^i&\mbox{for $h_n^1< i <h_n^2$,}\\
 u_n(1-\e)+\frac12\gamma \e&\mbox{for $i=h_n^2$,}\\
 \frac12(u(1-)+u(1+))-\gamma+i\lambda_n\gamma&\mbox{for $h_n^2< i <  n.$}
 \end{cases}
\end{equation*}
We observe that $(v_{n,\e})$ converges in $L_{\loc}^1$ to $u_\e\in BV^\ell(0,1)$, where 
\begin{equation*}
  u_\e(x)=
  \begin{cases} 
  \frac12(u(0-)+u(0+))+\gamma x&\mbox{if $ x\in(0,\e)$,}\\ 
 u(x)&\mbox{if $x\in(\e,1-\e)$,}\\
 \frac12(u(1-)+u(1+))+\gamma(x-1)&\mbox{if $x\in(1-\e,1)$.}
 \end{cases}
\end{equation*}
The construction of $v_{n,\e}$ is such that 
\begin{align}\label{jump:vnis}
 \lim_{n\to\infty}\frac{v_{n,\e}^{h_n^{i}+s}-v_{n,\e}^{h_n^{i}+s-1}}{\lambda_n}=+\infty\qquad\mbox{for $i\in\{1,2\}$ and $s\in\{0,1\}$.}
\end{align}
Combining \eqref{rec:without}--\eqref{jump:vnis}, $\gamma>0$, and $(0,+\infty)\subset \dom J_j$ for all $j\in\{1,\dots,K\}$ (see assumption (i)), we obtain that 
\begin{align}\label{eq:limsupeps}
\limsup_{n\to\infty} H_n^\ell(v_n)&\leq\limsup_{n\to\infty} \sum_{j=1}^K\sum_{i=0}^{n-1}\lambda_nJ_j\left(\frac{v_{n,\e}^{i+j}-v_{n,\e}^i}{j\lambda_n}\right)\notag\\
&\leq \int_\e^{1-\e} J_{CB}^{**}(u'(x))\,dx+2\e J_{CB}(\gamma)=H^\ell(u_\e).
\end{align}
From \eqref{eq:limsupeps} the existence of a recovery sequence for $u$ follows by the lower semi-continuity of the $\Gamma$-$\limsup$, see e.g.~\cite[Remark~1.29]{B02}.

Finally, we consider $u\in BV^\ell(0,1)$ such that $H^\ell(u)<\infty$ and $u(0)=u(0-)=u(0+)$. As it is discussed in \cite[p.~40]{BDG99}, we find a suitable approximation of $u$ by functions with a positive jump in $0$, i.e.~a sequence $(u_j)$ satisfying $u_j\to u$ in $L_{\loc}^1(\R)$ such that $\int_0^1J_{CB}^{**}(u_j'(x))\,dx\to\int_0^1J_{CB}^{**}(u'(x))\,dx$ and $u_j(0+)>u_j(0-)=u(0)$. By the previous considerations, we obtain a recovery sequence for every $u_j$ and the existence of a recovery sequence for $u$ follows again by the lower semi-continuity of the $\Gamma$-$\limsup$.

\end{proof}

\section{$\Gamma$-limit of $E_n^\ell$}\label{sec:En}

In this section, we derive the $\Gamma$--limit of the sequence $(E_n^\ell)$, defined in \eqref{ene:rescaledbc}. For this, it is useful to rewrite the energy $E_n^\ell(v)$. For every $v_n\in\A_n^{\#,\ell}(0,1)$, we define for $j\in\{2,\dots,K\}$
\begin{equation}\label{def:zeta}
 \zeta_{j,n}^i:=J_j\left(\frac{v_n^{i+j}-v_n^i}{j\sqrt{\lambda_n}}+\gamma\right)+\frac{c_j}{j}\sum_{s=i}^{i+j-1}J_1\left(\frac{v_n^{s+1}-v_n^s}{\sqrt{\lambda_n}}+\gamma\right)-J_{0,j}(\gamma).
\end{equation}
Using \eqref{rewJ1} and  $J_{CB}(\gamma)=\sum_{j=2}^KJ_{0,j}(\gamma)$, we can write $E_n^\ell(v_n)$ for any $v_n\in\A_n^{\#,\ell}(0,1)$ as
\begin{equation}\label{ene:rew}
 E_n^\ell(v_n)=\sum_{j=2}^K\sum_{i=0}^{n-1}\zeta_{j,n}^i.
\end{equation}
By the definition of $J_{0,j}$ and $\gamma$, we have $\zeta_{j,n}^i\geq J_{0,j}(\gamma+\frac{v_n^{i+j}-v_n^i}{j\sqrt{\lambda_n}})-J_{0,j}(\gamma)\geq0$ for $j\in\{2,\dots,K\}$ and $i\in\{0,\dots,n-j\}$. The following lemma yields a sharper lower bound on $\zeta_{n,j}^i$; it is inspired by \cite[Remark 4]{BLO} and will be applied in the proof of Theorem~\ref{Th1}.
\begin{lemma}\label{lemma:CB}
Suppose that $J_j$, $j=1,\dots,K$ satisfy the assumptions (i)--(viii). For $\eta_1>0$ sufficiently  small there exists a constant $C_1>0$ such that for all $j\in\{2,\dots,K\}$
\begin{equation}\label{ineq:L:1}
 J_j\left(\sum_{s=1}^j\frac{z_s}{j}\right)+\frac{c_j}{j}\sum_{s=1}^jJ_1(z_s)-J_{0,j}(\gamma)\geq C_1\sum_{s=1}^j(z_s-\gamma)^2
\end{equation}
if $\sum_{s=1}^j|z_s-\gamma|\leq \eta_1$
\end{lemma}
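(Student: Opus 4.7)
The plan is to combine the Cauchy--Born-type lower bound \eqref{cb} from assumption~(vii), which handles the oscillation part $\sum_s(z_s-z)^2$, with a Taylor expansion of $\psi_j$ at its minimum $\gamma$, which handles the macroscopic part $(z-\gamma)^2$, and then package the two contributions together by means of the Pythagorean-style identity $\sum_s(z_s-\gamma)^2=\sum_s(z_s-z)^2+j(z-\gamma)^2$ where $z:=\frac{1}{j}\sum_s z_s$.

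First I would set $z:=\frac{1}{j}\sum_{s=1}^j z_s$. By the triangle inequality $|z-\gamma|\leq \frac{1}{j}\sum_s|z_s-\gamma|\leq \eta_1$ and $\sum_s|z_s-z|\leq \sum_s|z_s-\gamma|+j|z-\gamma|\leq 2\eta_1$, so choosing $\eta_1\leq \eta/3$ ensures that the hypothesis $\sum_s|z_s-z|+|z-\gamma|\leq\eta$ of~(vii) is satisfied. Assumption~(vii) then yields
\begin{equation*}
J_j(z)+\frac{c_j}{j}\sum_{s=1}^jJ_1(z_s)-\psi_j(\gamma)\;\geq\; \psi_j(z)-\psi_j(\gamma)+C\sum_{s=1}^j(z_s-z)^2,
\end{equation*}
where I used the definition \eqref{def:psij} of $\psi_j$.

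Next I would control $\psi_j(z)-\psi_j(\gamma)$ from below. Since $J_1$ and $J_j$ are $C^2$ near $\gamma$ by~(i), so is $\psi_j$, and by~(vi) we have $\psi_j'(\gamma)=0$ and $\psi_j''(\gamma)>0$. Taylor expansion gives
\begin{equation*}
\psi_j(z)-\psi_j(\gamma)=\tfrac12\psi_j''(\gamma)(z-\gamma)^2+o\bigl((z-\gamma)^2\bigr),
\end{equation*}
so, shrinking $\eta_1$ further if necessary, we obtain $\psi_j(z)-\psi_j(\gamma)\geq \tfrac14\psi_j''(\gamma)(z-\gamma)^2$ for $|z-\gamma|\leq\eta_1$.

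Finally, setting $C_1:=\min\{C,\tfrac{1}{4j}\psi_j''(\gamma)\}>0$ and combining the two inequalities,
\begin{equation*}
J_j(z)+\frac{c_j}{j}\sum_{s=1}^jJ_1(z_s)-\psi_j(\gamma)\geq C_1\,j(z-\gamma)^2+C_1\sum_{s=1}^j(z_s-z)^2=C_1\sum_{s=1}^j(z_s-\gamma)^2,
\end{equation*}
where the last equality is the identity $\sum_s(z_s-\gamma)^2=\sum_s(z_s-z)^2+j(z-\gamma)^2$, which follows from $\sum_s(z_s-z)=0$. There is no real obstacle here: assumption~(vii) is essentially tailored to produce the quadratic control on the microscopic oscillations, and the macroscopic quadratic control comes for free from strict convexity of $\psi_j$ at $\gamma$; the only care needed is in choosing $\eta_1$ small enough that both the range hypothesis of~(vii) and the quadratic lower bound for $\psi_j$ hold simultaneously.
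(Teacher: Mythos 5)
Your proof is correct and rests on the same two ingredients as the paper's argument for Lemma~\ref{lemma:CB} --- assumption (vii) to control the microscopic oscillation $\sum_s(z_s-z)^2$, and the strict positivity of $\psi_j''(\gamma)$ to control the macroscopic deviation $(z-\gamma)^2$ --- glued together by the decomposition of $\sum_s(z_s-\gamma)^2$ about the mean $z$. The difference is one of presentation: the paper runs a proof by contradiction, assuming a tuple $\hat z$ violates the inequality with constant $C/N$ for all $N$, splits $\sum(\hat z_s-\gamma)^2\le 2\sum(\hat z_s-\hat z)^2+2j(\hat z-\gamma)^2$ (losing a factor of $2$), and then contradicts the Taylor lower bound $\psi_j(\hat z)-\psi_j(\gamma)\ge\frac14\delta(\hat z-\gamma)^2$. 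You instead note the exact Pythagorean identity $\sum_s(z_s-\gamma)^2=\sum_s(z_s-z)^2+j(z-\gamma)^2$ and add the two lower bounds directly, producing the explicit constant $C_1=\min\{C,\tfrac{1}{4j}\psi_j''(\gamma)\}$. This is a cleaner route that avoids the detour through contradiction and yields a concrete constant. One small presentational slip: you write $|z-\gamma|\le\eta_1$ but then use the sharper bound $|z-\gamma|\le\eta_1/j$ (which is what actually follows from Jensen/the triangle inequality) to conclude $\sum_s|z_s-z|\le 2\eta_1$; with the weaker bound $|z-\gamma|\le\eta_1$ you would only get $\sum_s|z_s-z|\le(j+1)\eta_1$, which is still fine after adjusting $\eta_1\le\eta/(j+2)$. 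Either way the argument closes; just state the bound you actually use.
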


\begin{proof}
Fix $j\in\{2,\dots,K\}$. If $\sum_{s=1}^jz_s=j\gamma$, the claim follows from assumption (vii). Let $\e,\eta>0$ denote the same constants as in assumption (vi) and (vii). By \eqref{def:psij}, we have $J_{0,j}=\psi_j=J_j+c_jJ_1$ on $(-\infty,\gamma+\e]$. Moreover, recall that $\psi_j\in C^2(0,+\infty)$, $\gamma>0$ and $\psi_j''(\gamma)>0$. Hence, we find $\eta_1\in(0,\e)$ and $\delta>0$ such that $\sum_{s=1}^j|z_s-\gamma|\leq \eta_1$ implies $\sum_{s=1}^j|z_s-z|+|z-\gamma|\leq \eta$, where $z=\frac1j \sum_{s=1}^jz_s$ and $\psi_j''\geq\delta$ on $[\gamma-\eta_1,\gamma+\eta_1]$. 

Let us now show \eqref{ineq:L:1} whenever $\sum_{s=1}^j|z_s-\gamma|\leq \eta_1$. Assume by contradiction that there exist $\hat z_s$, $s=1,\dots,j$ satisfying $\sum_{s=1}^j|\hat z_s-\gamma|\leq\eta_1$ and for all $N>2$ 
$$J_j(\hat z)+\frac{c_j}{j}\sum_{s=1}^jJ_1(\hat z_s)-J_{0,j}(\gamma)\leq \frac{C}{N}\sum_{s=1}^j(\hat z_s-\gamma)^2,$$
where $C$ is the same constant as in \eqref{cb}, and $\hat z:=\frac1j\sum_{s=1}^j\hat z_s$. By the choice of $\eta_1$, we have $\sum_{s=1}^j|\hat z_s-\hat z|+|\hat z-\gamma|\leq \eta$ and thus by \eqref{cb} it holds
\begin{align*}
 J_j(\hat z)+\frac{c_j}{j}\sum_{s=1}^jJ_1(\hat z_s)-J_{0,j}(\gamma)\leq& \frac{C}{N}\sum_{s=1}^j(\hat z_s-\gamma)^2\leq\frac{2C}{N}\sum_{s=1}^j(\hat z_s-\hat z)^2+\frac{2Cj}{N}(\hat z-\gamma)^2\\
\leq&\frac{2}{N}\left(J_j(\hat z)+\frac{c_j}{j}\sum_{s=1}^jJ_1(\hat z_s)-\psi_j(\hat z)\right)+\frac{2Cj}{N}(\hat z-\gamma)^2.
\end{align*}
Using $\psi_j(\hat z)\geq\psi_j(\gamma)$ and $J_{0,j}(\hat z)=\psi_j(\hat z)$ (since $\hat z\leq \gamma+\e$ and \eqref{def:psij}), we obtain 
\begin{align*}
 \psi_j(\hat z)-\psi_j(\gamma)\leq J_j(\hat z)+\frac{c_j}{j}\sum_{s=1}^jJ_1(\hat z_s)-\psi_j(\gamma)\leq \frac{2jC}{N-2}(\hat z-\gamma)^2.
\end{align*}
Clearly, this is, for $N$ sufficiently large, a contradiction to
$$\psi_j(\hat z)-\psi_j(\gamma)=\int_0^1\psi_j''(\gamma + s(\hat z-\gamma))(1-s)(\hat z-\gamma)^2\,ds\geq\frac12\delta(\hat z-\gamma)^2,$$
where we use $\psi_j'(\gamma)=0$.

\end{proof}

We are now in the position to prove the main result of this section which is a $\Gamma$-convergence result for the functionals $(E_n^\ell)$. 
\begin{theorem}\label{Th1}
Suppose that $J_j$, $j=1,\dots,K$ satisfy the assumptions (i)--(viii). Then the sequence $(E_{n}^\ell)$ $\Gamma$--converges with respect to the $L_{\loc}^1$--topology to the functional $E^\ell$ defined, on piecewise-$H^1$ functions $v$ such that $x\mapsto v(x)-\ell x$ is $1$-periodic, by
\begin{equation*}
 E^\ell(v)=\begin{cases}
               \alpha\displaystyle{\int_0^1}v'(x)^2\,dx+\beta(\#S_v\cap[0,1))&\mbox{if $[v](x)>0$ on $S_v$,}\\
	    +\infty&\mbox{else,}
              \end{cases}
\end{equation*}
where $\alpha:=\frac12J_{CB}''(\gamma)$. Further, the boundary layer energy due to a jump of $v$ is given by
\begin{equation}\label{def:bij}
 \beta:=2B(\gamma)-\sum_{j=1}^KjJ_j(\gamma)
\end{equation}
with
\begin{align}\label{Bgamma}
 B(\gamma):=&\inf_{N\in\mathbb{N}_0}\min\Bigg\{\sum_{i\geq0}\bigg\{\sum_{j=1}^KJ_j\left(\frac{u^{i+j}-u^i}{j}\right)-J_{CB}(\gamma)\bigg\}:\notag\\&\hspace*{2cm}u:\mathbb N_0\to \mathbb R,u^{0}=0,u^{i+1}-u^{i}=\gamma\mbox{ if $i\geq N$}\Bigg\}.
\end{align}
Moreover, if $\ell>0$ it holds
\begin{equation*}
 \lim_{n\to\infty}\inf_v E_n^\ell(v)=\min_v E^\ell(v)=\min\{\alpha \ell^2,\beta\}.
\end{equation*}
\end{theorem}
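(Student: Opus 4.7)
The plan is to establish the two $\Gamma$-convergence inequalities separately and then compute the minima. The starting point is the decomposition \eqref{ene:rew}, $E_n^\ell(v_n)=\sum_{j=2}^K\sum_{i=0}^{n-1}\zeta_{j,n}^i$ with each $\zeta_{j,n}^i\geq 0$, a consequence of the minimality of $\gamma$ for $J_{0,j}$ via \eqref{psienv}. Non-negativity allows me to localize the energy and to split it into an elastic part and a fracture part, mirroring the strategy of \cite{BLO,BS14,SSZ12} but with Lemma~\ref{lemma:CB} playing the role that a Cauchy-Born-type hypothesis on the interaction potentials plays in those works.

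For the liminf inequality, I would fix $\eta_1>0$ as in Lemma~\ref{lemma:CB} and partition the lattice indices into a good set $G_n$, where every nearest-neighbour difference $(v_n^{s+1}-v_n^s)/\sqrt{\lambda_n}$ entering some $\zeta_{j,n}^i$ stays within $\eta_1$ of $0$, and its bad complement $B_n$. On $G_n$, the quadratic coercivity \eqref{ineq:L:1} combined with a second-order Taylor expansion of each $\zeta_{j,n}^i$ around $\gamma$ (using $\psi_j'(\gamma)=0$ and the identity $J_{CB}''(\gamma)=\sum_{j=2}^K\psi_j''(\gamma)$) transfers into the bulk bound $\alpha\int_0^1|v'|^2\,dx$ by weak-$L^2$ lower semicontinuity of the piecewise-affine interpolants of $v_n$. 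On $B_n$, I would cluster the bad indices into finitely many $K$-separated groups; each cluster asymptotically carries the two-sided boundary-layer energy at one jump point of $v$, and matching with the definition \eqref{Bgamma} -- the weights $c_j(j-s)/j$ there coming precisely from the partial $j$-range bonds that straddle the cluster edge, via the same identity \eqref{rewJ1} -- yields the lower bound $2B(\gamma)-\sum_{j=2}^Kj\psi_j(\gamma)=\beta$ per cluster, hence $\beta\cdot\#S_v$ in total. The sign constraint $[v]>0$ on $S_v$ follows because a negative macroscopic jump would force some $J_1$-argument to approach $-\infty$, hence the energy to diverge by hypothesis (ii).

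For the limsup inequality, given an admissible $v$ with jump set $S_v=\{x_1,\dots,x_M\}$ and tolerance $\eps>0$, I would select for each jump a nearly optimal configuration in the variational problem \eqref{Bgamma} with some slack parameter $k_\eps$, paste these profiles (mirrored on the two sides) onto $2k_\eps$ lattice points around each $\lfloor nx_k\rfloor$, interpolate $v$ piecewise-linearly in between, and assemble $v_n\in\A_n^{\#,\ell}(\R)$ with $v_n(0)=0$. Since each boundary-layer profile matches the ground state $\gamma$ outside its core, the jump and elastic contributions decouple: each jump yields $\beta+\mathcal O(\eps)$, while the elastic part yields $\alpha\int|v'|^2+o(1)$ via a second-order Taylor expansion of $\psi_j$ at $\gamma$ (the linear term vanishes since $\psi_j'(\gamma)=0$). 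Sending $n\to\infty$ and then $\eps\to 0$ gives the upper bound $E^\ell(v)$.

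Finally, for the minima statement, I would test $E^\ell$ with $v(x)=\ell x$ (yielding $\alpha\ell^2$) and with the piecewise-constant function $v\equiv 0$ on $(0,1)$ having a single positive jump of size $\ell$ at $x=0$ (yielding $\beta$); conversely, applying Jensen's inequality to the affine relation linking $\int_0^1 v'\,dx$ and the jump sizes to $\ell$ shows that no admissible competitor beats $\min\{\alpha\ell^2,\beta\}$. Convergence $\lim_n\inf E_n^\ell=\min E^\ell$ then follows from the $\Gamma$-convergence together with an equi-coercivity check for $(E_n^\ell)$ based on the non-negativity of $\zeta_{j,n}^i$ and Lemma~\ref{lemma:CB}. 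The main obstacle I anticipate is the bookkeeping that matches each bad cluster in the liminf step to exactly twice the boundary-layer energy $B(\gamma)$: one must simultaneously track which truncated $j$-range bonds straddle the cluster, ensure that the slack parameters on the two sides can be chosen independently, and control the pathologies that arise when jumps approach the periodic boundary or when two jumps get close to one another.
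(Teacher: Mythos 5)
Your overall strategy — use the decomposition $E_n^\ell=\sum_{j,i}\zeta_{j,n}^i$ with $\zeta_{j,n}^i\geq0$, split into an elastic part handled by Taylor expansion at $\gamma$ and a boundary-layer part handled by constructing competitors for $B(\gamma)$, then paste near-optimal profiles for the limsup — is the same as the paper's, and the treatment of the sign constraint, the limsup construction, and the identification of the minima are all on track. There is, however, a genuine gap in the elastic liminf step.

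Your good set $G_n$ is defined by a \emph{fixed} threshold $\eta_1$, so on $G_n$ the scaled difference quotients $\bar z^i=(v_n^{i+j}-v_n^i)/(j\sqrt{\lambda_n})$ are merely \emph{bounded} by $\eta_1$, not vanishing. But the Taylor remainder $\eta_j$ of $J_{0,j}$ at $\gamma$ is only $o(z^2)$ as $z\to0$; since $\psi_j$ is only assumed $C^2$, for $|z|$ up to $\eta_1$ you have no better bound than $|\eta_j(z)|\leq\omega_j(\eta_1)z^2$ with $\omega_j(\eta_1)$ a fixed positive constant. Summing over $G_n$ the estimate $\zeta_{j,n}^i\geq J_{0,j}(\gamma+\bar z^i)-\psi_j(\gamma)=\alpha_j(\bar z^i)^2+\eta_j(\bar z^i)$ therefore only produces
\[
\sum_{i\in G_n}\zeta_{j,n}^i\geq\alpha_j\sum_{i}(\bar z^i)^2-\omega_j(\eta_1)\sum_{i}(\bar z^i)^2,
\]
and since $\sum_i(\bar z^i)^2\approx\int|v_{n,j}^s{}'|^2\,dx$ stays bounded but does not vanish, the subtracted term is $O(1)$. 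You thus obtain the elastic lower bound with a wrong constant $\alpha-\text{const}$, not $\alpha$. To get the sharp constant, one needs a \emph{slowly vanishing} ($n$-dependent) cutoff: the paper excludes indices where the \emph{unscaled} difference quotient exceeds $\lambda_n^{-1/4}$ (the set $I_{n,j}^s$), so that on the complement the scaled quotients satisfy $|\bar z^i|\leq\lambda_n^{1/4}\to0$ uniformly; then $\eta_j(\bar z^i)/(\bar z^i)^2\to0$ in $L^\infty$ while $(\bar z^i)^2/\lambda_n$ is equibounded in $L^1$, and the product vanishes. The energy bound gives $\#I_{n,j}^s=\mathcal O(\lambda_n^{-1/2})$, so the excluded set has vanishing measure and $\chi_{n,j}^s\to1$ in measure, letting the weak-$L^2$ lower semicontinuity go through. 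The paper additionally localizes with $\rho$-neighbourhoods of $S_v$ (the set $Q_n(\rho)$) rather than trying to cluster bad indices directly, which also circumvents some of the bookkeeping pathologies you correctly flag; you would still need to show, as the paper does via the truncated sequences $\tilde v_n,\tilde u_n$ and the identity \eqref{rew:freeb}, that the two halves around each detected jump are genuine competitors for $B(\gamma)$ up to an $o(1)$ error.
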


The following equivalent formulation of the boundary layer energy will  be convenient for the proof of Theorem~\ref{Th1}:
\begin{lemma}\label{prop:beta}
Let $J_j$ satisfy the assumptions of Theorem \ref{Th1}. Then it holds
$$\beta=2\widetilde B(\gamma)-\sum_{j=2}^KjJ_{0,j}(\gamma),$$
where $\beta$ is given in \eqref{def:bij} and $\widetilde B(\gamma)$ is defined by
\begin{align}\label{Bgamma2}
\widetilde B(\gamma):=&\inf_{k\in\mathbb{N}_0}\min\bigg\{\sum_{j=2}^Kc_j\sum_{s=1}^{j-1}\frac{j-s}{j}J_1\left(u^{s}-u^{s-1}\right)+\sum_{j=2}^K\sum_{i\geq0}\bigg\{J_j\left(\frac{u^{i+j}-u^i}{j}\right)\notag\\
&\hspace*{2cm}+\frac{c_j}{j} \sum_{s=i}^{i+j-1}J_1\left(u^{s+1}-u^s\right)-J_{0,j}(\gamma)\bigg\}:\notag\\
&\hspace*{2cm} u:\mathbb N_0\to \mathbb R,u^{0}=0,u^{i+1}-u^{i}=\gamma\mbox{ if $i\geq k$}\bigg\}.
\end{align}
\end{lemma}
We postpone the calculations regarding Lemma~\ref{prop:beta} and directly turn to the proof of Theorem~\ref{Th1}.

\begin{proof}[Proof of Theorem \ref{Th1}]

\textbf{Coerciveness.} Let $(v_n)\subset W_{\loc}^{1,\infty}(\R)$ be such that $\sup_n E_n^\ell(v_n)<+\infty$. By assumption (vi) and Lemma \ref{lemma:CB}, there exist constants $K_1,~K_2>0$ such that for all $i\in\Z$ it holds
\begin{align}\label{est:zeta}
 \zeta_{n,j}^i&\geq \left\{K_1\sum_{s=i}^{i+j-1}\left(\frac{v_n^{s+1}-v_n^s}{\sqrt{\lambda_n}}\right)^2\right\}\wedge K_2,
\end{align}
where $a\wedge b:=\min \{a,b\}$ and $\zeta_{n,j}^i$ is given in \eqref{def:zeta}. Hence, \eqref{ene:rew} and \eqref{est:zeta} yield
\begin{align}\label{ene:coer}
 E_n^\ell(v_n)\geq&\sum_{j=2}^K\sum_{i=0}^{n-1}\left\{\lambda_nK_1\sum_{s=i}^{i+j-1}\left(\frac{v_n^{s+1}-v_n^s}{\lambda_n}\right)^2\right\}\wedge K_2\geq\sum_{i=0}^{n-1}\left(\lambda_n K_1\left(\frac{v_n^{i+1}-v_n^i}{\lambda_n}\right)^2\wedge K_2\right).
\end{align}
The discrete energy on the right-hand side of \eqref{ene:coer} is well studied, see e.g.~\cite[Remark~9]{BLO}. In particular, we can conclude from \eqref{ene:coer} that if $(v_n)$ is bounded in $L^1(0,1)$ then $(v_n)$ is compact in $L^1(0,1)$ and there exists a finite set $S\subset [0,1]$ such that $(v_n)$ is locally weakly compact in $H^1((0,1)\setminus S)$. 

\smallskip

Let us remark that $\sup_n E_n^\ell(v_n)<+\infty$ implies $\sup_n\|v_n'\|_{L^{1}(0,1)}<+\infty$. To show this, we combine \eqref{ene:coer} with the growth conditions of $J_j$ at $-\infty$, see hypothesis (ii). For every $n\in N$, we set
$$I_n^-:=\left\{i\in\{0,\dots,n-1\}:v_n^{i+1}<v_n^i\right\},\quad I_n^{--}:=\left\{i\in I_n^-:\lambda_nK_1\left(\frac{v_n^{i+1}-v_n^i}{\lambda_n}\right)^2\geq K_2\right\}.$$
The estimate \eqref{ene:coer} implies $I^{--}:=\sup_n\#I_n^{--}<+\infty$. Moreover, the equiboundedness of the energy, assumption (ii), $\zeta_{n,j}^i\geq0$, and the fact that $J_j$ is bounded from below for $j\in\{1,\dots,K\}$ imply that there exists a constant $M\in\R$ such that
\begin{equation}\label{est:linftyvn-}
 \gamma+\frac{v_n^{i+1}-v_n^i}{\sqrt{\lambda_n}}\geq M.
\end{equation}
Hence, using H\"older's inequality, $\#I_n^{-}\leq n$, \eqref{ene:coer} and \eqref{est:linftyvn-}, we have for $(v_n')^-:=-(v_n'\wedge0)$ that
\begin{align*}
 \|(v_n')^-\|_{L^1(0,1)}\leq& \sum_{i\in I_n^-\setminus I_n^{--}}\lambda_n\left|\frac{v_n^{i+1}-v_n^i}{\lambda_n}\right|+\sum_{i\in I_n^{--}}\sqrt{\lambda_n}\left|\frac{v_n^{i+1}-v_n^i}{\sqrt{\lambda_n}}\right|\\
\leq& \left(\frac{1}{K_1}E_n^\ell(v_n)\right)^\frac12+1+\sqrt{\lambda_n}\#I^{--}|M-\gamma|.
\end{align*}
Thus there exists $C>0$ such that $\sup_{n}\|(v_n')^-\|_{L^1(0,1)}<C$ and, appealing to $v_n(1)-v_n(0)=\ell$ by the $1$-periodicity of $x\mapsto v_n(x)-\ell x$, we obtain that $(v_n')$ is equibounded in $L^1(0,1)$.

\bigskip

\textbf{Liminf inequality.} Let $(v_n)\subset W_{\loc}^{1,\infty}(\R)$ be such that $\sup_n E_n^\ell(v_n)<+\infty$ and $v_n\to v$ in $L^1(0,1)$. By the previous step, we can assume that $v$ is a piecewise $H^1$-function satisfying $x\mapsto v(x)-\ell x$ is $1$-periodic, and there exists a finite set $S=\{x_1,\dots,x_N\}$ such that $v_n\wto v$ locally weakly in $H^1((0,1)\setminus S)$.  For simplicity, we assume that $v$ has a single jump and without loss of generality we set $S_v=\{\frac12\}$.

\smallskip

\textit{Step~1.} We estimate the elastic energy and show non-existence of negative jumps.

\smallskip

To this end, we adjust arguments given in \cite[Proof of Theorem 4, Step 2]{BLO} to the present situation. In particular, we show that the maps $z\mapsto J_{0,j}(z)-J_{0,j}(\gamma)$ can be estimated from below by certain truncated quadratic potentials, similar to \cite[eq.~(111)]{BLO}. This allows to apply $\Gamma$-convergence results for truncated quadratic potentials, see \cite[Section 8.3]{B02}. 

The assumptions (ii), (vi) and (viii) imply
\begin{equation}\label{lim:j0j}
 \liminf_{z\to+\infty} J_{0,j}(z)>J_{0,j}(\gamma),\quad \liminf_{z\to-\infty}J_{0,j}(z)=+\infty.
\end{equation}
Combining \eqref{lim:j0j} and the fact that $\gamma$ is the unique minimizer of $J_{0,j}$, we find for each $j\in\{2,\dots,K\}$ constants $C_{1,j},C_{2,j},C_{3,j}>0$ such that 
\begin{equation}\label{j0jlb}
 J_{0,j}(z)-J_{0,j}(\gamma)\geq \Psi_j(z-\gamma):=\begin{cases}C_{1,j}(z-\gamma)^2\wedge C_{2,j}&\mbox{if $z\geq \gamma$,}\\
            C_{1,j}(z-\gamma)^2\wedge C_{3,j}&\mbox{if $z\leq \gamma$.}                                        
                                                   \end{cases}
\end{equation}
Since $J_{0,j}(z)=\psi_j(z)$ for $z\leq \gamma+\e$, see hypothesis (vi), we obtain
\begin{equation}\label{sup:c1}
 \sup\left\{C_{1,j}:\mbox{\eqref{j0jlb} holds }\right\}=\frac12 \psi_j''(\gamma)\qquad\mbox{for all $j\in\{2,\dots,K\}$}.
\end{equation}
Moreover, \eqref{lim:j0j} implies 
\begin{equation}\label{sup:c3}
 \sup\left\{C_{3,j}:\mbox{\eqref{j0jlb} holds for some $C_{1,j}$ and $C_{2,j}$}\right\}=+\infty\qquad\mbox{for all $j\in\{2,\dots,K\}$}.
\end{equation}
Using \eqref{j0jlb}, we have the following estimate
\begin{align}\label{ene:lbpsi}
 E_n^\ell(v_n)=& \sum_{j=2}^K\sum_{i=0}^{n-1}\zeta_{j,n}^i\geq\sum_{j=2}^K\sum_{i=0}^{n-1}\left\{J_{0,j}\left(\gamma+\frac{v_n^{i+j}-v_n^i}{j\sqrt{\lambda_n}}\right)-J_{0,j}(\gamma)\right\}\notag\\
 \geq& \sum_{j=2}^K\sum_{s=0}^{j-1}\sum_{i\in (s+j\Z)\cap [0,n)}\Psi_j\left(\frac{v_n^{i+j}-v_n^i}{j\sqrt{\lambda_n}}\right).
\end{align}
As mentioned above, discrete energies with potentials of the type $\Psi_j$ are well-studied, see e.g.~\cite[Section 8.3]{B02} or \cite[Remark 9]{BLO}. In particular, we have for every $j\in\{2,\dots,K\}$ and $s\in\{0,\dots,j-1\}$ that
\begin{align}\label{lim:lbpsi}
 &\liminf_{n\to\infty}\sum_{i\in (s+j\Z)\cap  [0,n)}\Psi_j\left(\frac{v_n^{i+j}-v_n^i}{j\sqrt{\lambda_n}}\right)\notag\\
 &\qquad \geq \frac{C_{1,j}}{j}\int_0^1v'(x)^2\,dx+C_{2,j}\#\{t\in S_v:[v](t)>0\}+C_{3,j}\#\{t\in S_v:[v](t)<0\}.
\end{align}
Here, we use that the piecewise affine interpolations $v_{n,j}^s$ of $v_n$ with respect to the lattice $\lambda_n (s+j\Z)$, cf.~\eqref{def:vnjs}, satisfy $v_{n,j}^s\to v$ in $L^1_{\loc}(\R)$. Using \eqref{sup:c1}--\eqref{lim:lbpsi}, and $\frac12\sum_{j=2}^K\psi_j''(\gamma)=\frac12 J_{CB}''(\gamma)=\alpha$, we get existence of $C_2\geq0$ such that 
\begin{equation}\label{est:liminfen0}
 \liminf_{n\to\infty} E_n^\ell(v_n)=\liminf_{n\to\infty}\sum_{j=2}^K\sum_{i=0}^{n-1}\zeta_{j,n}^i\geq \alpha\int_0^1v'(x)^2\,dx+C_2\# S_v
\end{equation}
with $[v](t)>0$ on $S_v$, and $+\infty$ else.  

For later usage, we state an estimate involving only terms which contribute to the elastic energy and are sufficiently far away from the jump. For given $\rho>0$, let $(\hat k_n^1),(\hat k_n^2)\subset\N$ be such that $\frac12-2\rho\in\lambda_n[\hat k_n^1,\hat k_n^1+1)$ and $\frac12+2\rho\in\lambda_n[\hat k_n^2,\hat k_n^2+1)$. A similar calculations as for \eqref{est:liminfen0} yields
\begin{equation}\label{est:liminfen1}
\liminf_{n\to\infty}\sum_{j=2}^K\left\{\sum_{i=0}^{\hat k_n^1}\zeta_{j,n}^i+\sum_{i=\hat k_n^2}^{n-1}\zeta_{j,n}^i\right\}\geq \alpha\int_{(0,1)\setminus (\frac12-3\rho,\frac12+3\rho)}v'(x)^2\,dx.
\end{equation}

\medskip

\textit{Step~2.} We estimate the jump energy. 

\smallskip

Recall that $S_v=\{\frac12\}$. By the above consideration leading to \eqref{est:liminfen0} the jump has to be positive. Let $\rho\in(0,\frac18)$ be sufficiently small such that $\{\frac12\}=(\frac12-\rho,\frac12+\rho)\cap S$. We claim existence of $(k_n^1),(k_n^2)\subset \N$ such that $\frac12-\rho\leq\lambda_n(k_n^1+s)\leq \frac12-\frac{\rho}{2}$ and $\frac12+\frac{\rho}{2}\leq \lambda_n(k_n^2+s)\leq \frac12+\rho$ for $s=1,\dots,K$ with
\begin{equation}\label{lim:kn}
 \lim_{n\to\infty}\frac{v_n^{k_n^1+s+1}-v_n^{k_n^1+s}}{\sqrt{\lambda_n}}=0,\quad \lim_{n\to\infty}\frac{v_n^{k_n^2+s+1}-v_n^{k_n^2+s}}{\sqrt{\lambda_n}}=0\quad\mbox{for $s=1,\dots,K-1$.}
\end{equation}
We argue by contradiction: suppose that there exists $c>0$ such that for all $i\in\N$ satisfying $\frac12-\rho\leq \lambda_n (i+s)\leq \frac12-\frac{\rho}{2}$ with $s\in\{1,\dots,K-1\}$ there exists an $\hat s\in\{1,\dots,K-1\}$ such that $|\frac{v_n^{i+\hat s+1}-v_n^{i+\hat s}}{\sqrt{\lambda_n}}|\geq c$. Let $i_n^\rho,j_n^\rho\subset\N$  be such such that $\frac12-\rho\in(i_n^\rho-1,i_n^\rho]\lambda_n$ and $\frac12-\frac{\rho}{2}\in(j_n^\rho,j_n^\rho+1]\lambda_n$. We have by \eqref{ene:coer}
$$E_n^\ell(v_n)\geq \sum_{i=i_n^\rho+1}^{j_n^\rho-K}\zeta_{n,K}^i\geq \sum_{i=i_n^\rho+1}^{j_n^\rho-K}K_1c^2\wedge K_2\geq K_1c^2\wedge K_2(j_n^\rho-i_n^\rho-K)\to+\infty$$
as $n\to \infty$, which contradicts $\sup_nE_n^\ell(v_n)<+\infty$. This implies the existence of $(k_n^1)$ with the above properties, and the existence of $(k_n^2)$ follows with the same argument.

We claim that
\begin{equation}\label{est:lbjump}
 \liminf_{n\to\infty}\sum_{j=2}^K\sum_{i=k_n^1+1}^{k_n^2}\zeta_{j,n}^i\geq 2\widetilde B(\gamma)-\sum_{j=2}^KjJ_{0,j}(\gamma),
\end{equation}
where $\widetilde B(\gamma)$ is given in \eqref{Bgamma2}. Notice, that \eqref{est:lbjump} finishes the proof of the $\liminf$ inequality. Indeed, a combination of \eqref{est:liminfen1}, where $\hat k_n^1<k_n^1$ and $k_n^2<\hat k_n^2$ for $n$ sufficiently large, with \eqref{est:lbjump} and Lemma~\ref{prop:beta} implies
\begin{equation*}
 \liminf_{n\to\infty} E_n^\ell(v_n)=\liminf_{n\to\infty}\sum_{j=2}^K\sum_{i=0}^{n-1}\zeta_{j,n}^i\geq \alpha\int_{(0,1)\setminus (\frac12-3\rho,\frac12+3\rho)}v'(x)^2\,dx+\beta,
\end{equation*}
and the $\liminf$ inequality follows by letting $\rho$ tend to zero.

Let us prove \eqref{est:lbjump}. From $v_n\to v$ in $L^1(0,1)$ and $\frac12\in S_v$, we deduce that there exists $(h_n)\subset\N$ with $\lambda_n h_n\to \frac12$ such that
$$\lim_{n\to\infty}\frac{v_n^{h_n+1}-v_n^{h_n}}{\sqrt{\lambda_n}}=+\infty.$$
Indeed, otherwise $v_n'$ would be equibounded in $L^2$ in a neighborhood of $\frac12$.\\
Since $\lim_{z\to\infty}J_j(z)=0$ for $j=1,\dots,K$, we conclude that some terms in $\zeta_{j,n}^{h_n-s}$ for $s=0,\dots,j-1$ and $j=2,\dots,K$ vanish as $n$ tends to infinity. We collect them in the function $r_1(n)$ defined by
$$r_1(n):=\sum_{j=1}^K\sum_{s=-j+1}^0J_j\left(\gamma+\frac{v_n^{h_n+j+s}-v_n^{h_n+s}}{j\sqrt{\lambda_n}}\right)=\sum_{j=1}^K\sum_{s=h_n-j+1}^{h_n}J_j\left(\gamma+\frac{v_n^{s+j}-v_n^s}{j\sqrt{\lambda_n}}\right).$$
It will be useful to rewrite the terms which involve $v_n^{h_n+1}-v_n^{h_n}$ as follows:
\begin{align*}
 &\sum_{j=2}^K\sum_{i=h_n-j+1}^{h_n}\zeta_{j,n}^i\\
&=\sum_{j=2}^Kc_j\sum_{s=1}^j\frac{j-s}{j}\left(J_1\left(\gamma+\frac{v_n^{h_n-s+1}-v_n^{h_n-s}}{\sqrt{\lambda_n}}\right)+J_1\left(\gamma+\frac{v_n^{h_n+s+1}-v_n^{h_n+s}}{\sqrt{\lambda_n}}\right)\right)\\
&\quad-\sum_{j=2}^KjJ_{0,j}(\gamma)+r_1(n).
\end{align*}
%
%
Hence,
\begin{align}\label{rev:inf:jump}
 \sum_{j=2}^K\sum_{i=k_n^1+1}^{k_n^2}\zeta_{j,n}^i=&\sum_{j=2}^K\bigg\{\sum_{i=k_n^1+1}^{h_n-j}\zeta_{j,n}^i+c_j\sum_{s=1}^{j-1}\frac{j-s}{j}J_1\left(\gamma+\frac{v_n^{h_n-s+1}-v_n^{h_n-s}}{\sqrt{\lambda_n}}\right)\notag\\
&\qquad+\sum_{i=h_n+1}^{k_n^2}\zeta_{j,n}^i+c_j\sum_{s=1}^{j-1}\frac{j-s}{j}J_1\left(\gamma+\frac{v_n^{h_n+s+1}-v_n^{h_n+s}}{\sqrt{\lambda_n}}\right)\bigg\}\notag\\
&-\sum_{j=2}^KjJ_{0,j}(\gamma)+r_1(n).
\end{align}
Thus it remains to prove that
\begin{align}
& \sum_{j=2}^K\bigg\{\sum_{i=k_n^1+1}^{h_n-j}\zeta_{j,n}^i+c_j\sum_{s=1}^{j-1}\frac{j-s}{j}J_1\left(\gamma+\frac{v_n^{h_n-s+1}-v_n^{h_n-s}}{\sqrt{\lambda_n}}\right)\bigg\}\geq \widetilde B(\gamma)-r_2(n)\label{inf:int:1}\\
&\sum_{j=2}^K\bigg\{\sum_{i=h_n+1}^{k_n^2}\zeta_{j,n}^i+c_j\sum_{s=1}^{j-1}\frac{j-s}{j} J_1\left(\gamma+\frac{v_n^{h_n+s+1}-v_n^{h_n+s}}{\sqrt{\lambda_n}}\right)\bigg\}\geq \widetilde B(\gamma)-r_3(n)\label{inf:int:2}
\end{align}
with $r_2(n),r_3(n)\to0$ as $n\to\infty$. Let us prove inequality \eqref{inf:int:1}. Therefore, we define for $i\geq0$
\begin{equation}
 \tilde v_n^i=\begin{cases}
               \gamma i+\frac{v_n^{h_n}-v_h^{h_n-i}}{\sqrt{\lambda_n}}&\mbox{if $0\leq i\leq h_n-k_n^1-1$,}\\
	       \gamma i+\frac{v_n^{h_n}-v_n^{k_n^1+1}}{\sqrt{\lambda_n}}&\mbox{if $i\geq h_n-k_n^1-1$.}
              \end{cases}
\end{equation}
Now we rewrite the left-hand side in \eqref{inf:int:1} in terms of $\tilde v_n$: 
\begin{align*}
 &\sum_{j=2}^K\bigg\{\sum_{i=k_n^1+1}^{h_n-j}\zeta_{j,n}^i+c_j \sum_{s=1}^{j-1}\frac{j-s}{j}J_1\left(\gamma+\frac{v_n^{h_n-s+1}-v_n^{h_n-s}}{\sqrt{\lambda_n}}\right)\bigg\}\\
&=\sum_{j=2}^Kc_j\sum_{s=1}^j\frac{j-s}{j}J_1(\tilde v_n^s-\tilde v_n^{s-1})+\sum_{j=2}^K\sum_{i\geq0}\bigg\{J_j\left(\frac{\tilde v_n^{i+j}-\tilde v_n^j}{j}\right)\\&\quad+\frac{c_j}{j}\sum_{s=i}^{i+j-1}J_1(\tilde v_n^{s+1}-v_n^s)-J_{0,j}(\gamma)\bigg\}-r_2(n)
\end{align*}
where
\begin{align*}
 r_2(n):=\sum_{j=2}^K\sum_{i=h_n-k_n^1-j}^{h_n-k_n^1-2}\bigg\{J_j\left(\frac{\tilde v_n^{i+j}-\tilde v_n^j}{j}\right)+\frac{c_j}{j}\sum_{s=i}^{i+j-1}J_1(\tilde v_n^{s+1}-v_n^s)-J_{0,j}(\gamma)\bigg\}.
\end{align*}
Indeed, the definition of $\tilde v_n$ yields $\tilde v_n^{i+1}-\tilde v_n^i=\gamma$ for $i\geq h_n-k_n^1-1$ and thus the terms in the infinite sum over $i\geq h_n-k_n^1-1$ are $J_j(\gamma)+c_jJ_1(\gamma)-J_{0,j}(\gamma)=0$, cf.\ (vi). Furthermore, we have by the definition of $(\tilde v_n)$ and \eqref{lim:kn}:
$$\lim_{n\to\infty}\left(\tilde v_n^{h_n-k_n^1-K+s}-\tilde v_n^{h_n-k_n^1-K+s-1}\right)=\gamma+\lim_{n\to\infty}\frac{v_n^{k_n^1+1+K-s}-v_n^{k_n^1+K-s}}{\sqrt{\lambda_n}}=\gamma$$ 
for $s\in\{1,\dots,K-1\}$. Hence, $\lim_{n\to\infty}r_2(n)=0$. Note that $\tilde v_n^0=0$ and $\tilde v_n^{i+1}-\tilde v_n^i=\gamma$ for $i\geq h_n-k_n^1-1$. Thus $\tilde v_n$ is an admissible test function in the definition of $\widetilde B(\gamma)$, see \eqref{Bgamma2}, and we obtain \eqref{inf:int:1}. The proof of \eqref{inf:int:2} is similar.  A combination of \eqref{rev:inf:jump}--\eqref{inf:int:2} yields \eqref{est:lbjump} and finishes the proof of the liminf inequality.

\bigskip

\textbf{Limsup inequality.} To complete the $\Gamma$-convergence proof it is left to show that for every piecewise $H^1(0,1)$-function $v$ there exists a sequence $(v_n)$ converging to $v$ in $L^1(0,1)$ such that $\lim_nE_n^\ell(v_n)=E^\ell(v)$. We provide a recovery sequence only for functions $v$ which have a single jump, are constant close to the jump, and are sufficiently smooth away from the jump. It is straightforward to extend the construction to functions with finitely many jumps, and the claim follows by standard density and relaxation arguments.\\
Let $v$ be such that $S_v=\{t\}$ for some $t\in(0,1)$, $v\in C^2([0,1]\setminus \{t\})$ and $v\equiv v(t^-)$ on $(t-\rho,t)$ and $v\equiv v(t^+)$ on $(t,t+\rho)$ for some $\rho>0$ with $\rho<\min\{t,1-t\}$. Hence, there exists $\tilde v\in C^2(0,1)$ such that $v=\tilde v+ \chi_{(t,1]}(v(t^+)-v(t^-))$. Since $E^\ell(v)=+\infty$ if $v(t^+)<v(t^-)$, we can assume $v(t^+)>v(t^-)$. As in \cite[eq.~(130)]{BLO} and \cite[p.~680]{SSZ12}, a recovery sequence for $v$ is given by its piecewise affine interpolation with a small pertubation close to the jump which account for the boundary layer energy.

Fix $\eta>0$. The definition of $\widetilde B(\gamma)$ ensures the existence of a function $w:\N_0\to\R$ and of an $N\in\N$ such that $w^0=0$, $w^{i+1}-w^i=\gamma$ if $i\geq N$ and
\begin{align*}
& \sum_{j=2}^Kc_j\sum_{s=1}^{j-1}\frac{j-s}{j}J_1\left(w^{s}-w^{s-1}\right)+\sum_{j=2}^K\sum_{i\geq0}\bigg\{J_j\left(\frac{w^{i+j}-w^i}{j}\right)\\
&\qquad+\frac{c_j}{j} \sum_{s=i}^{i+j-1}J_1\left(w^{s+1}-w^s\right)-J_{0,j}(\gamma)\bigg\}\leq \widetilde B(\gamma)+\eta.
\end{align*}
Since the term in the infinite sum vanishes identically for $i\geq N$ we can replace the sum by any sum from $i=0$ to $\tilde N$ with $\tilde N \geq N$ without changing the estimate. Let $(k_n^1),(k_n^2),(h_n)\subset\N$ be such that $t-\frac{\rho}{2}\in[k_n^1,k_n^1+1)\lambda_n$, $t\in[h_n,h_n+1)\lambda_n$ and $t+\frac{\rho}{2}\in [k_n^2,k_n^2+1)\lambda_n$. Furthermore, let $n$ be large enough such that
\begin{equation}\label{assN}
N\leq \min\{h_n-k_n^1-K,~k_n^2-h_n-K\} 
\end{equation}
is satisfied. We define a sequence $(v_n)$ such that $v_n\in\A_n^{\#,\ell}(0,1)$ with help of $w$ and $\tilde v$ by  
\begin{equation*}
 v_n^i=\begin{cases}
        \tilde v(i\lambda_n)&\mbox{if $0\leq i\leq k_n^1$}\\
	v(t^-)-\sqrt{\lambda_n}(w^{h_n-i}-w^N+\gamma(i-h_n+N))&\mbox{if $k_n^1\leq i\leq h_n$,}\\
	v(t^+)+\sqrt{\lambda_n}(w^{i-(h_n+1)}-w^N-\gamma(i-h_n-1-N))&\mbox{if $h_n+1\leq i\leq k_n^2$,}\\
	v(t^+)+\tilde v(i\lambda_n)&\mbox{if $k_n^2\leq i\leq n$.}
       \end{cases}
\end{equation*}
By the definition of $w$, we have $w^{i+1}-w^i=\gamma$ if $i\geq N$, which implies that the terms with prefactor $\sqrt{\lambda_n}$ vanish for $i\leq h_n-N-1$ respectively $i\geq h_n+N+2$. It is not hard to check that $v_n\to v$ in $L^1(0,1)$ (see \cite[p.~680]{SSZ12} for related calculations). Next we show $\lim_{n\to\infty} E_n^\ell(v_n)=E^\ell(v)$. The definition of $v_n$ implies that
$$\frac{v_n^{h_n+j-s}-v_n^{h_n-s}}{\sqrt{\lambda_n}}=\frac{v(t+)-v(t-)}{\sqrt{\lambda_n}}+\mathcal O(1)\to+\infty\qquad\mbox{as $n\to\infty$,}$$ 
for all $j\in\{1,\dots,K\}$ and $s\in\{0,\dots,j-1\}$. Similar to \eqref{rev:inf:jump}, we obtain
\begin{align*}
 \sum_{j=2}^K\sum_{i=k_n^1}^{k_n^2}\zeta_{j,n}^i=&\sum_{j=2}^K\bigg\{\sum_{i=k_n^1}^{h_n-j}\zeta_{j,n}^i+c_j\sum_{s=1}^{j-1}\frac{j-s}{j}J_1\left(\gamma+\frac{v_n^{h_n-s+1}-v_n^{h_n-s}}{\sqrt{\lambda_n}}\right)+\sum_{i=h_n+1}^{k_n^2}\zeta_{j,n}^i\\
&\quad+c_j\sum_{s=1}^{j-1}\frac{j-s}{j}J_1\left(\gamma+\frac{v_n^{h_n+s+1}-v_n^{h_n+s}}{\sqrt{\lambda_n}}\right)\bigg\}-\sum_{j=2}^KjJ_{0,j}(\gamma)+r(n),
\end{align*}
where 
$$r(n):=\sum_{j=1}^K\sum_{s=-j+1}^0J_j\left(\gamma+\frac{v_n^{h_n+j+s}-v_n^{h_n+s}}{j\sqrt{\lambda_n}}\right)\to0\qquad\mbox{as $n\to\infty$.}$$
By the definition of $v_n$ and $w$ it follows for $n$ sufficiently large such that \eqref{assN} holds that
\begin{align}\label{sup:jump:a}
& \sum_{j=2}^K\bigg\{\sum_{i=k_n^1}^{h_n-j}\zeta_{j,n}^i+c_j\sum_{s=1}^{j-1}\frac{j-s}{j}J_1\left(\gamma+\frac{v_n^{h_n-s+1}-v_n^{h_n-s}}{\sqrt{\lambda_n}}\right)\bigg\}\notag\\
&\quad =\sum_{j=2}^K\sum_{i=0}^{h_n-k_n^1-j}\bigg\{J_j\left(\frac{w^{i+j}-w^i}{j}\right)+\frac{c_j}{j}\sum_{s=i}^{i+j-1}J_1(w^{s+1}-w^s)- J_{0,j}(\gamma)\bigg\}\notag\\
&\qquad+\sum_{j=2}^Kc_j\sum_{s=1}^{j-1}\frac{j-s}{j}J_1(w^{s+1}-w^s)\leq \widetilde B(\gamma)+\eta,
\end{align}
where we used $h_n-k_n^1-K\geq N$. In the same way, we obtain
\begin{equation}\label{sup:jump:b}
\sum_{j=2}^K\left\{\sum_{i=h_n+1}^{k_n^2}\zeta_{j,n}^i+c_j\sum_{s=1}^{j-1}\frac{j-s}{j}J_1\left(\gamma+\frac{v_n^{h_n+s+1}-v_n^{h_n+s}}{\sqrt{\lambda_n}}\right)\right\}\leq \widetilde B(\gamma)+\eta. 
\end{equation}
Let us now recover the integral term. A Taylor expansion of $J_j$ at $\gamma$ yields:
$$J_j(\gamma+z)=J_j(\gamma)+J_j'(\gamma)z+\frac12 J_j''(\gamma)z^2+\eta_j(z),$$
where $\frac{\eta_j(z)}{|z|^2}\to0$ as $z\to0$ for $j\in\{1,\dots,K\}$. Hence, using the definition of $\psi_j(z)=J_j(z)+c_jJ_1(z)$, $\psi_j'(\gamma)=0$ and $\alpha_j=\frac12\psi_j''(\gamma)$, we obtain for $z=\frac{1}{j}\sum_{s=1}^jz_s$ and $\omega(z):=\sup_{|t|\leq z}|\eta_j(t)|+j\sup_{|t|\leq z}|\eta_1(t)|$
\begin{align}\label{est:supexpansion}
J_j(\gamma+z)+\frac{c_j}{j}\sum_{s=1}^jJ_1(\gamma+z_s)-J_{0,j}(\gamma)\leq&\frac12J_j''(\gamma)\left(\frac{1}{j}\sum_{s=1}^jz_s\right)^2+\frac{c_j}{2j}J_1''(\gamma)\sum_{s=1}^jz_s^2+\omega(\max_{1\leq s\leq j}|z_s|)\notag\\
=&\frac{\alpha_j}{j}\sum_{s=1}^jz_s^2-\frac{J_j''(\gamma)}{2j^2}\sum_{s=1}^{j-1}\sum_{m=s+1}^j(z_s-z_m)^2+\omega(\max_{1\leq s\leq j}|z_s|), 
\end{align}
where we use in the last step:
$$\left(\sum_{s=1}^ja_s\right)^2=\sum_{s=1}^ja_s^2+2\sum_{s=1}^{j-1}\sum_{m=s+1}^ja_sa_m=j\sum_{s=1}^ja_s^2-\sum_{s=1}^{j-1}\sum_{m=s+1}^j(a_s-a_m)^2.$$
Combining \eqref{est:supexpansion} with $v_n^i=v(i\lambda_n)$ for all $i\in\{0,\dots,h_n-N-1\}\cup\{h_n+N+2,\dots,n\}$ and the $C^2$-regularity of $v$ away from the jump, we find $r_2(n)$ satisfying $r_2(n)\to0$ as $n\to\infty$ such that:
\begin{align*}
 \zeta_{j,n}^i\leq\lambda_n\left\{\frac{\alpha_j}{j}\sum_{s=i}^{i+j-1}\left(\frac{ v_n^{s+1}- v_n^s}{\lambda_n}\right)^2+r_2(n)\right\},
\end{align*}
for $i\in\{0,\dots,k_n^1-1\}\cup\{k_n^2+1,\dots,n-1\}=:Q_n$. Hence, we have for $n$ large enough
\begin{align}\label{sup:elastic}
\sum_{j=2}^K\sum_{i\in Q_n}\zeta_{j,n}^i&\leq\sum_{j=2}^K\frac{\alpha_j}{j}\lambda_n\sum_{i\in Q_n}\sum_{s=i}^{i+j-1}\left(\frac{ v_n^{s+1}- v_n^s}{\lambda_n}\right)^2+r_2(n)\notag\\
&=\sum_{j=2}^K\alpha_j\lambda_n\sum_{i\in Q_n}\left(\frac{v_n^{i+1}-v_n^i}{\lambda_n}\right)^2+r_2(n)\notag\\
&=\alpha \int_{(0,1)\setminus (t-\frac{\rho}{4},t+\frac{\rho}{4})} v_n'(x)^2\,dx+r_2(n),
\end{align}
%
where we use in the second step the periodicity of $v_n^i$ and $v_n^{i+1}-v_n^i=0$ for $i\in\{k_n^1,\dots,k_n^1+K\}\cup\{k_n^2-K,\dots,k_n^2\}$ for $n$ sufficiently large. Since $v_n$ is the piecewise affine interpolation of $\tilde v$ on $(0,t-\frac{\rho}{4})$ and $v(t+)+\tilde v$ on $(t+\frac{\rho}{4},1)$ and $\tilde v'=0$ on $(t-\rho,t+\rho)$, a combination of \eqref{sup:jump:a}, \eqref{sup:jump:b}, and \eqref{sup:elastic} yields
\begin{align*}
 \limsup_{n\to\infty}E_n^\ell(v_n)=\limsup_{n\to\infty}\sum_{j=2}^K\sum_{i=0}^{n-1}\zeta_{j,n}^i\leq \alpha \int_0^1\tilde v'(x)^2\,dx+2\widetilde B(\gamma)-\sum_{j=2}^Kj\psi_j(\gamma)+2\eta
\end{align*}
and the claim follows from $\|\tilde v'\|_{L^2((0,1))}=\|v'\|_{L^2((0,1))}$ and by the arbitrariness of $\eta>0$, where we note that $v'$ denotes the absolutely continuous part of the derivative of $v$ only. 

\bigskip

\textbf{Convergence of minimization problems.} The convergence of minimal energies follows from the coerciveness of $E_n$ and the $\Gamma$-convergence result. Regarding the coerciveness, we recall that $\sup_n E_n^\ell(v_n)<\infty$ yields $\sup_n\|v_n'\|_{L^1(0,1)}<\infty$ and thus there exists a sequence of constants $c_n$ such that $v_n-c_n$ is equibounded in $L^1(0,1)$  and by the discussion below \eqref{ene:coer} we obtain compactness of $v_n-c_n$ in $L^1(0,1)$. Moreover $\min_v E^\ell(v)=\min\{\alpha \ell^2,\beta\}$.
\end{proof}

\begin{proof}[Proof of Lemma~\ref{prop:beta}]
We prove that 
\begin{equation}\label{btb}
 B(\gamma)-\frac12J_1(\gamma)=\widetilde B(\gamma)-\frac12\sum_{j=2}^Kjc_jJ_1(\gamma),
\end{equation}
where $B(\gamma)$ and $\widetilde B(\gamma)$ are given in \eqref{Bgamma} and \eqref{Bgamma2}. This equality implies the assertion since 
$$\beta=2B(\gamma)-\sum_{j=1}^KjJ_j(\gamma)=2\widetilde B(\gamma)-\sum_{j=2}^Kj(J_j(\gamma)+c_jJ_1(\gamma))=2\widetilde B(\gamma)-\sum_{j=2}^Kj\psi_j(\gamma).$$
Let $u:\N_0\to\R$ be a candidate for the minimum problems defining $B(\gamma)$ and $\widetilde B(\gamma)$, i.e.~$u^0=0$ and $u^{i+1}-u^i=\gamma$ for $i\geq N$ for some $N\in\N_0$. Then it holds for the infinite sum in the definition of $B(\gamma)$ that
\begin{align*}
 &\sum_{j=2}^K\sum_{i\geq0}\bigg\{J_j\left(\frac{u^{i+j}-u^i}{j}\right)+\frac{c_j}{j} \sum_{s=i}^{i+j-1}J_1\left(u^{s+1}-u^s\right)-J_{0,j}(\gamma)\bigg\}\\
&=\sum_{i=0}^{N-1}\bigg\{\sum_{j=2}^KJ_j\left(\frac{u^{i+j}-u^i}{j}\right)-\sum_{j=2}^KJ_{0,j}(\gamma)\bigg\}+\sum_{j=2}^K\frac{c_j}{j}\sum_{i=0}^{N-1}\sum_{s=i}^{i+j-1}J_1\left(u^{s+1}-u^s\right).
\end{align*}
For given $j\in\{2,\dots,K\}$, the nearest neighbor terms on the right-hand side above can be rewritten as
\begin{align*}
 \frac1j\sum_{i=0}^{N-1}\sum_{s=i}^{i+j-1}J_1\left(u^{s+1}-u^s\right)
=&\sum_{i=0}^{N-1}J_1\left(u^{i+1}-u^i\right)-\sum_{s=1}^{j-1}\frac{j-s}{j}J_1\left(u^{s}-u^{s-1}\right)\\
&+\frac{1}{j}\sum_{i=N-j+1}^{N-1}\sum_{s=N}^{i+j-1}J_1\left(u^{s+1}-u^s\right),
\end{align*}
%
%
where the third term on the right-hand side above simplifies since $u^{i+1}-u^i=\gamma$ for $i\geq N$ to
\begin{align*}
 \frac{1}{j}\sum_{i=N-j+1}^{N-1}\sum_{s=N}^{i+j-1}J_1\left(u^{s+1}-u^s\right)=\frac12(j-1)J_1(\gamma).
\end{align*}
%
%
Combining the previous identities and recalling once more $\sum_{j=2}^Kc_j=1$ and $\sum_{j=2}^KJ_{0,j}(\gamma)=\sum_{j=1}^KJ_j(\gamma)=J_{CB}(\gamma)$, we obtain that
\begin{align*}
& \sum_{j=2}^Kc_j\sum_{s=1}^{j-1}\frac{j-s}{j}J_1\left(u^{s}-u^{s-1}\right)+\sum_{j=2}^K\sum_{i\geq0}\bigg\{J_j\left(\frac{u^{i+j}-u^i}{j}\right)\\
&+\frac{c_j}{j} \sum_{s=i}^{i+j-1}J_1\left(u^{s+1}-u^s\right)-J_{0,j}(\gamma)\bigg\}-\frac12\sum_{j=2}^Kjc_jJ_1(\gamma)\\
&=\sum_{i=0}^{N-1}\bigg\{\sum_{j=2}^KJ_j\left(\frac{u^{i+j}-u^i}{j}\right)-\sum_{j=2}^KJ_{0,j}(\gamma)\bigg\}+\sum_{j=2}^Kc_j\sum_{i=0}^{N-1}J_1\left(u^{i+1}-u^i\right)-\frac12J_1(\gamma)\\
&=\sum_{i=0}^{N-1}\bigg\{\sum_{j=1}^KJ_j\left(\frac{u^{i+j}-u^i}{j}\right)-J_{CB}(\gamma)\bigg\}-\frac12J_1(\gamma)\\
&=\sum_{i\geq0}\bigg\{\sum_{j=1}^KJ_j\left(\frac{u^{i+j}-u^i}{j}\right)-J_{CB}(\gamma)\bigg\}-\frac12J_1(\gamma).
\end{align*}
By the arbitrariness of $u:\N_0\to\R$ and $N\in\N_0$ with $u^0=0$ and $u^{i+1}-u^i=\gamma$ for $i\geq N$ and the definition of $B(\gamma)$ and $\widetilde B(\gamma)$, see \eqref{Bgamma} and \eqref{Bgamma2}, the equality \eqref{btb} and thus the lemma is proven.
\end{proof}

\begin{remark}
 In the case of nearest and next-to-nearest neighbour interactions, i.e.~$K=2$, we have $c_2=c_K=1$. Thus it holds $\psi_j(z)=J_2(z)+J_1(z)=J_{CB}(z)$ and the boundary layer energy $\widetilde B(\gamma)$ is given by
\begin{equation*}
 \begin{split}
 \widetilde B(\gamma)=&\inf_{k\in\mathbb{N}_0}\min\bigg\{\frac{1}{2}J_1\left(u^{1}-u^{0}\right)+\sum_{i\geq0}\bigg\{J_2\left(\frac{u^{i+2}-u^i}{2}\right)+\frac12\sum_{s=i}^{i+1}J_1(u^{s+1}-u^s)-J_{CB}(\gamma)\bigg\}:\\
&\hspace*{2cm} u:\mathbb N_0\to \mathbb R,u^{0}=0,u^{i+1}-u^{i}=\gamma\mbox{ if $i\geq k$}\bigg\}.
 \end{split}
\end{equation*}
This coincides with the definition of the (free) boundary layer energy $B$ and $B(\gamma)$  defined in \cite{BLO} and \cite{BC07,SSZ11} respectively. The jump energy $\beta$ then reads 
$$\beta=2\widetilde B(\gamma)-2J_{CB}(\gamma),$$
and coincides with the corresponding jump energies defined in \cite{BC07,BLO,SSZ11}.
\end{remark}

\begin{remark}\label{blofail}
The proof of Theorem~\ref{Th1} is very similar to the proof of \cite[Theorem 4]{BLO} for a related statement for multibody potentials with finite range interactions. Next, we briefly discuss why \cite[Theorem 4]{BLO} is not directly applicable for Lennard-Jones systems, see \eqref{def:lj}, with $K>2$. In \cite{BLO}, a lower-bound comparison potential is introduced, which in our notation reads
$$\Phi_-(z):=\inf\left\{\sum_{j=1}^K\sum_{i=1}^{K-j+1}\frac{1}{K-j+1}J\left(\sum_{s=i}^{i+j-1}z_s\right):\sum_{s=1}^Kz_s=Kz\right\},$$
cf.~\cite[eq.~(8)]{BLO}. It is assumed that $\Phi_-$ has a unique minimizer $z_{\min}$ and the infimum in the definition of $\Phi_-(z_{\min})$ is attained for $z_s=z_{\min}$ for $s=1,\dots,K$, cf.~\cite[assumptions (v), (vi) on p.157, see also Remark~2]{BLO}. This is in general not satisfied by Lennard-Jones potentials \eqref{def:lj} for $K>2$. Consider $K=3$. In this case the term in the infimum problem in the definition of $\Phi_-(z_{\min})$ reads
\begin{align*}
 \frac{1}{3}\left\{J(z_1)+J(z_2)+J(z_3)\right\}+\frac{1}{2}\left\{J\left(z_1+z_2\right)+J\left(z_1+z_2\right)\right\}+J\left(3z_{\min}\right),
\end{align*}
where $z_1+z_2+z_3=3z_{\min}$. Assume by contradiction that the infimum is attained for $z_1=z_2=z_3=z_{\min}$. The optimality conditions yield that there exists $\lambda\in\R$ such that $\tfrac13J'(z_{\min})+\frac12J'(2z_{\min})=\lambda$ (condition for $z_1=z_3=z_{\min}$) and $\frac13J'(z_{\min})+J'(2z_{\min})=\lambda$ (condition for $z_2=z_{\min}$). Hence, $J'(2z_{\min})=0$ and thus $z_{\min}=\frac12\delta_1$, where $\delta_1$ denotes the unique minimizer of $J$ as is given in \eqref{LJ:deltaj}. The unique minimizer $\gamma$ of $J_{CB}$ satisfies $\gamma>\frac12 \delta_1$, see \eqref{gammaK}. By the definition of $\Phi_-$, it holds $\Phi_-(z)\leq J_{CB}(z)$, and by assumption we have $\inf_{z\in \R}\Phi_-(z)=\Phi_-(\delta_2)=J(\delta_2)+J(2\delta_2)+J(3\delta_2)= J_{CB}(\delta_2)$. Hence, 
$$\Phi_-(\gamma)\leq J_{CB}(\gamma)<J_{CB}(\delta_2)=\Phi_-(\delta_2)=\inf_{z\in \R}\Phi_-(z)\leq \Phi_-(\gamma),$$
which is a contradiction. Hence, the Lennard-Jones potentials \eqref{def:lj} do not satisfy the assumptions on $\Phi_-$ in the case $K=3$. This argument can be adapted for all $K>2$.
\end{remark}

\textbf{Acknowledgment.} This work was partly supported by a grant of the Deutsche Forschungsgemeinschaft (DFG) SCHL 1706/2-1.

\end{document}